\newcommand{\tony}[1]{{\color{blue} \sf  Tony: [#1]}}
\newcommand{\defi}[1]{\textsf{#1}} % for defined terms
\newcommand{\bA}{{\mathbb A}}
\newcommand{\bC}{{\mathbb C}}
\newcommand{\bF}{{\mathbb F}}
\newcommand{\bG}{{\mathbb G}}
\newcommand{\bP}{{\mathbb P}}
\newcommand{\bQ}{{\mathbb Q}}
\newcommand{\bR}{{\mathbb R}}
\newcommand{\bZ}{{\mathbb Z}}
\newcommand{\kbar}{{\overline{k}}}
\newcommand{\Adeles}{{\mathbf A}}
\newcommand{\kk}{{\mathbf k}}
\newcommand{\pp}{{\mathfrak p}}
\newcommand{\calA}{{\mathcal A}}
\newcommand{\calC}{{\mathcal C}}
\newcommand{\calF}{{\mathcal F}}
\newcommand{\calO}{{\mathcal O}}
\newcommand{\calS}{{\mathcal S}}
\newcommand{\calW}{{\mathcal W}}
\newcommand{\calX}{{\mathcal X}}
\newcommand{\calY}{{\mathcal Y}}
\newcommand{\calZ}{{\mathcal Z}}
\newcommand{\sO}{{\mathscr O}}
\DeclareMathOperator{\rk}{rk}
\DeclareMathOperator{\inv}{inv}
\DeclareMathOperator{\im}{im}
\DeclareMathOperator{\Br}{Br}
\DeclareMathOperator{\Cl}{Cl}
\DeclareMathOperator{\Pic}{Pic}
\DeclareMathOperator{\Spec}{Spec}
\DeclareMathOperator{\Proj}{Proj}
\DeclareMathOperator{\ev}{ev}
\DeclareMathOperator{\res}{res}
\DeclareMathOperator{\Mat}{Mat}
\DeclareMathOperator{\sm}{sm}
\newcommand{\et}{{\rm \mathaccent 19 et}}
\newcommand{\ra}{\rightarrow}
\newtheorem{theorem}{Theorem}[section]
\newtheorem{lemma}[theorem]{Lemma}
\newtheorem{corollary}[theorem]{Corollary}
\newtheorem{proposition}[theorem]{Proposition}
\theoremstyle{definition}
\theoremstyle{remark}
\newtheorem{remark}[theorem]{Remark}
\newtheorem{remarks}[theorem]{Remarks}
\begin{document}

\title{Failure of the Hasse principle on general K3 surfaces}
\subjclass[2000]{Primary 11 G35; Secondary 14 G05, 14 F22}
%\keywords{9999}
\author{Brendan Hassett}
\thanks{This research was supported by National Science Foundation Grants 0554491, 0901645, and 1103659.}
\address{Department of Mathematics, Rice University, Houston, TX 77005, USA}
\email{hassett@rice.edu}
\urladdr{http://www.math.rice.edu/\~{}hassett}

\author{Anthony V\'arilly-Alvarado}
\address{Department of Mathematics, Rice University, Houston, TX 77005, USA}
\email{varilly@rice.edu}
\urladdr{http://www.math.rice.edu/\~{}av15}

\begin{abstract}
We show that transcendental elements of the Brauer group of an algebraic surface can obstruct the Hasse principle. We construct a general K3 surface $X$ of degree $2$ over $\bQ$, together with a two-torsion Brauer class $\alpha$ that is unramified at every finite prime, but ramifies at real points of $X$. Motivated by Hodge theory, the pair $(X,\alpha)$ is constructed from a double cover of $\bP^2\times\bP^2$ ramified over a hypersurface of bi-degree $(2,2)$.
\end{abstract}

\maketitle

%****************************************************************************
\section{Introduction}

Let $X$ be a smooth projective geometrically integral variety over a number field $k$.  If $X$ has a $k_v$-point for every place $v$ of $k$ (equivalently, if its set $X(\Adeles)$ of adelic points is nonempty), yet it does not have a $k$-point, then we say that $X$ does not satisfy the \defi{Hasse principle}.  Manin~\cite{ManinICM} showed that any subset $\calS$ of the Brauer group $\Br(X) := H^2_\et(X,\bG_m)$ may be used to construct an intermediate set
\[
X(k) \subseteq X(\Adeles)^{\calS} \subseteq X(\Adeles)
\]
that often explains failures of the Hasse principle, in the sense that $X(\Adeles)^{\calS}$ may be empty, even if $X(\Adeles)$ is not.  In this case, we say there is a \defi{Brauer-Manin obstruction} to the Hasse principle for $X$. See~\S\ref{s: local invariants} for the definition of $X(\Adeles)^{\calS}$.

There is a filtration of the Brauer group $\Br_0(X) \subseteq \Br_1(X) \subseteq \Br(X)$, where
\begin{align*}
\Br_0(X) &:= \im\left( \Br(k) \to \Br(X) \right), \\
\Br_1(X) &:= \ker\left( \Br(X) \to \Br(\overline{X})\right),
\end{align*}
and $\overline{X} = X\times_k \kbar$ for a fixed algebraic closure $\kbar$ of $k$.   Elements in $\Br_0(X)$ are said to be \defi{constant}; class field theory shows that if $\calS \subseteq \Br_0(X)$, then $X(\Adeles)^\calS = X(\Adeles)$, so these elements cannot obstruct the Hasse principle.  Elements in  $\Br_1(X)$ are called \defi{algebraic}; the remaining elements of the Brauer group are \defi{transcendental}.  

There is a large body of literature, spanning the last four decades, on algebraic Brauer classes and algebraic Brauer-Manin obstructions to the Hasse principle and the related notion of weak approximation (i.e., where sets $\calS \subseteq \Br_1(X)$ suffice to explain failures of these phenomena); see, for example \cite{ManinCubic,BSD,CTCS80,CTSSD87,CTKS87,SD93,SD99,KT04,Bright02,BBFL07,Cunnane,Corn,KT08,Logan,VA08,LoganVanLuijk,EJ-CJM,EJ-JCNT,Corn10,EJ-IJNT}. The systematic study of these obstructions benefits in no small part from an isomorphism
\begin{equation}
	\label{eq: HS}
	\Br_1(X)/\Br_0(X) \xrightarrow{\sim} H^1\big(k,\Pic(\overline{X})\big),
\end{equation}
coming from the Hochschild-Serre spectral sequence.

Obstructions arising from transcendental elements, on the other hand, remain mysterious,  because it is difficult to get a concrete handle on transcendental elements of the Brauer group; there is no known analogue of~\eqref{eq: HS} for the group $\Br(X)/\Br_1(X)$.

If $X$ is a curve, or a surface of negative Kodaira dimension, then $\Br\big(\overline{X}\big) = 0$, so the Brauer group is entirely algebraic. On the other hand, in 1996 Harari constructed a $3$-fold with a transcendental Brauer-Manin obstruction to the Hasse principle \cite{Harari}.  This begs the question: what about algebraic surfaces? Can transcendental Brauer classes obstruct the Hasse principle on an algebraic surface? A natural place to study this question is the class of K3 surfaces; they are arguably some of the simplest surfaces of nonnegative Kodaira dimension in the Castelnuovo-Enriques-Manin classification.  The group $\Br(X)/\Br_1(X)$ is finite for a K3 surface~\cite{SkorobogatovZarhin}, but it can be nontrivial.

With arithmetic applications in mind, several authors over the last decade have constructed explicit transcendental elements on K3 surfaces \cite{Wittenberg,SkorobogatovSwinnertonDyer,HarariSkorobogatov,Ieronymou,Preu,IeronymouSkorobogatovZarhin,SkorobogatovZarhin2}.  Wittenberg, Ieronymou and Preu have used these elements to exhibit obstructions  \defi{weak approximation} (i.e., density of $X(k)$ in $\prod_v X(k_v)$ for the product of the $v$-adic topologies).  In all cases the K3 surfaces considered have elliptic fibrations that play a vital role in the construction of transcendental classes.

%Motivated to construct trancendental classes on \emph{general} K3 surfaces (i.e., surfaces with geometric Picard number 1), and inspired by Hodge-theoretic work of van Geemen and Voisin \tony{reference}, we constructed a pair $(X,\alpha)$ consisting of a K3 surface

Inspired by Hodge-theoretic work of van Geemen and Voisin~\cite{vanGeemen,Voisin}, we recently constructed a K3 surface with geometric Picard number $1$ (and hence no elliptic fibrations), together with a transcendental Brauer class $\alpha$ obstructing weak approximation; see~\cite{HVAV} (joint with Varilly).  The pair $(X,\alpha)$ was obtained from a cubic fourfold containing a plane.  At the time, we were unable to extend our work to obtain counterexamples to the Hasse principle, in part because we were unable to control the invariants of $\alpha$ at real points of $X$---ironically, this is precisely the reason we obtain a counterexample to weak approximation! See Remarks~\ref{rems: computations} as well.

Taking advantage of some recent developments (see Remarks~\ref{rems: computations}), our goal in this paper is to rectify the above situation and show, once and for all, that transcendental Brauer classes on algebraic surfaces can obstruct the Hasse principle. 

\begin{theorem}
\label{thm: main}
Let $X$ be a K3 surface of degree $2$ over a number field $k$, with function field $\kk(X)$, given as a sextic in the weighted projective space $\bP(1,1,1,3) = \Proj k[x_0,x_1,x_2,w]$ of the form
\begin{equation}
\label{eq: explicit K3}
w^2 = -\frac{1}{2}\cdot\det
\begin{pmatrix}
2A & B & C \\
B & 2D & E \\
C & E & 2F
\end{pmatrix},
\end{equation}
where $A,\dots,F \in k[x_0,x_1,x_2]$ are homogeneous quadratic polynomials. Then the class $\calA$ of the quaternion algebra $(B^2 - 4AD,A)$ in $\Br(\kk(X))$ extends to an element of $\Br(X)$.

When $k = \bQ$, there exist particular polynomials $A,\dots,F \in \bZ[x_0,x_1,x_2]$ such that $X$ has geometric Picard rank $1$ and $\calA$ gives rise to a transcendental Brauer-Manin obstruction to the Hasse principle on $X$.
\end{theorem}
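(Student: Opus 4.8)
The plan is to produce explicit homogeneous quadratics $A,\dots,F\in\bZ[x_0,x_1,x_2]$ and to verify, for the resulting surface $X/\bQ$, four things: that $X$ is everywhere locally soluble, that $\inv_p(\calA(x_p))=0$ for every finite prime $p$ and every $x_p\in X(\bQ_p)$, that $\inv_\infty(\calA(x_\infty))=\tfrac12$ for every $x_\infty\in X(\bR)$, and that $X$ has geometric Picard rank $1$. Granting the first assertion of the theorem, $\calA\in\Br(X)$, so the local evaluation maps $x_v\mapsto\inv_v(\calA(x_v))$ are defined at every place $v$ of $\bQ$. Once the three invariant computations are in place, every adelic point satisfies $\sum_v\inv_v(\calA(x_v))=\tfrac12\neq 0$, so $X(\Adeles)^{\calA}=\emptyset$ although $X(\Adeles)\neq\emptyset$; the Picard-rank computation then upgrades this to a \emph{transcendental} obstruction.

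I would treat the archimedean place first, as it is the conceptual heart of the construction. Writing $M$ for the symmetric matrix in~\eqref{eq: explicit K3}, the real quaternion algebra $(B^2-4AD,A)$ is nonsplit, with invariant $\tfrac12$, precisely when $A(x)<0$ and $(B^2-4AD)(x)<0$; these two inequalities say exactly that the leading principal minors $D_1=2A$ and $D_2=4AD-B^2$ of $M$ satisfy $D_1<0$ and $D_2>0$ at $x$. I would therefore choose $A,\dots,F$ so that $M(x)$ is negative definite for every $x\in\bP^2(\bR)$. This one condition does double duty: negative definiteness forces $\det M<0$, hence $-\tfrac12\det M>0$, so the equation $w^2=-\tfrac12\det M$ has real solutions and $X(\bR)\neq\emptyset$; and it forces $A<0$ and $B^2-4AD<0$ at every point, so $\inv_\infty(\calA(x_\infty))=\tfrac12$ identically on $X(\bR)$. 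Obtaining a constant, nonzero real invariant is exactly the step that eluded the weak-approximation construction of~\cite{HVAV}, and I expect it, together with the Picard-rank computation below, to be the main work.

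Next I would show that every finite-place invariant vanishes and that $X$ is locally soluble away from $\infty$. For a prime $p$ of good reduction, $X$ has a smooth proper model $\calX/\bZ_p$; since $\calA$ is unramified at $p$ it extends to $\Br(\calX)$, and because $X$ is proper any $x_p\in X(\bQ_p)$ extends to a section $\widetilde{x}_p\in\calX(\bZ_p)$ by the valuative criterion, whence $\inv_p(\calA(x_p))=\inv_p(\widetilde{x}_p^{*}\calA)\in\Br(\bZ_p)=0$. Thus only the finitely many primes dividing the discriminant of $M$ require attention, and for those I would check directly on the model that $(B^2-4AD,A)$ splits over $\bQ_p$ at every local point, for instance by exhibiting one of its two entries as a local square or norm along $X(\bQ_p)$; this is a finite, explicit computation. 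Everywhere local solubility follows similarly: for all but finitely many $p$ the Weil bounds give a smooth $\bF_p$-point on the reduction, which lifts by Hensel's lemma, and the remaining primes are verified by hand.

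Finally, to make the obstruction transcendental I would establish $\rho(\overline{X})=1$ by van Luijk's method: at two well-chosen primes of good reduction I would compute the characteristic polynomial of Frobenius on $H^2_{\et}$ of the reduction from point counts over $\bF_p,\bF_{p^2},\dots$, bounding each geometric Picard rank above by $2$. Assuming both bounds are attained, I would then compare the discriminants of the two rank-$2$ Néron--Severi lattices: since $\mathrm{NS}(\overline{X})$ injects into each reduction with finite (hence square-indexed) cokernel, if these discriminants lie in distinct square classes of $\bQ^\times/(\bQ^\times)^2$ then $\rho(\overline{X})=2$ is impossible, forcing $\rho(\overline{X})=1$. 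Then $\Pic(\overline{X})\cong\bZ$ is generated by the $\Gal(\overline{\bQ}/\bQ)$-fixed degree-$2$ polarization, so \eqref{eq: HS} gives $\Br_1(X)/\Br_0(X)\cong H^1\big(\bQ,\bZ\big)=0$ and hence $\Br_1(X)=\Br_0(X)$. The invariant pattern $(0,\dots,0,\tfrac12)$ already shows $\calA\notin\Br_0(X)$, since a constant class takes a point-independent invariant at each place and those invariants sum to zero, which $\tfrac12$ does not; therefore $\calA$ lies outside $\Br_1(X)$ and is transcendental, and the emptiness of $X(\Adeles)^{\calA}$ exhibits it as a transcendental Brauer--Manin obstruction to the Hasse principle. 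The principal obstacle throughout is the simultaneous realization of all these constraints by a single explicit tuple $(A,\dots,F)$---most delicately, keeping $M$ negative definite over $\bR$ while arranging the two reductions to have the shape needed to pin down $\rho(\overline{X})=1$.
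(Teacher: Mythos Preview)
Your overall architecture is right, and two of your design choices differ harmlessly from the paper's: for the real place the paper does not force $M(x)$ to be negative definite on all of $\bP^2(\bR)$, but instead takes $A,D,F$ negative definite and $B,C,E$ positive definite and proves (Lemma~\ref{lem:real place}) that the $2\times2$ principal minors $M_A,M_D,M_F$ are positive \emph{at real points of $X$}; and for the Picard rank the paper uses the Elsenhans--Jahnel one-prime refinement (Theorem~\ref{thm:torsion-free cokernel}, Proposition~\ref{prop:check rho = 1}) with tritangent-line tests rather than the two-prime discriminant comparison you propose.

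There are, however, two genuine gaps in your treatment of the finite places. First, the prime $p=2$ falls outside your good-reduction argument even when $X$ is smooth at $2$. The step ``since $\calA$ is unramified at $p$ it extends to $\Br(\calX)$'' is exactly what must be proved, and the paper's mechanism for this---spreading out the $\bP^1$-bundle $\calF\to\calX$ over $\Spec\calO_v$ via Theorem~\ref{thm: Stein}---passes through the relative variety of lines of a quadric surface bundle and therefore requires residue characteristic $\neq 2$ (see Lemma~\ref{lem:most invariants}). The paper handles $p=2$ separately by imposing explicit $2$-adic congruences on the coefficients of $A,\dots,F$ (Lemma~\ref{lem:2-adic invariants}) that force one of the six representatives~\eqref{eq:representatives} of $\calA$ to have a square entry at every $\bQ_2$-point. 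Your search space must be cut down by these congruences from the start, simultaneously with your real conditions.

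Second, at odd primes of bad reduction, ``check directly \dots at every local point'' is not a finite computation as stated: $X(\bQ_p)$ is a $p$-adic manifold with, a priori, many connected components on which $\inv_p$ could vary. The paper reduces this to a single evaluation by proving that $\ev_\calA$ is \emph{constant} on $X(\bQ_p)$: Proposition~\ref{prop:constant eval map} (after Colliot-Th\'el\`ene--Skorobogatov) gives constancy whenever the smooth locus of the special fiber has no connected unramified cyclic $\ell$-covers, and Lemma~\ref{lem:no cyclic covers} verifies this hypothesis for $\ell=2$ provided the special fiber has fewer than eight ordinary double points. This adds a further constraint on the example---mild singularities at every bad prime---that your search does not yet enforce, and in practice even identifying the bad primes requires factoring a very large integer (the paper exploits the second projection $\pi_2$ to make this tractable).
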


\begin{remark}
\label{rem: vG summary}
In~\cite[\S9]{vanGeemen}, Van Geemen showed that \emph{every} Brauer class $\alpha$ of order $2$ on a polarized complex K3 surface $(X,f)$ of degree $2$ with $\Pic(X) = \bZ f$ gives rise to (and must arise from) one of three types of varieties: 
\begin{itemize}
\item a smooth complete intersection of three quadrics in $\bP^5$ (itself a K3 surface), or 
\item a cubic fourfold containing a plane, or 
\item a double cover of $\bP^2\times \bP^2$ ramified along a hypersurface of bidegree $(2,2)$. 
\end{itemize}
More precisely, the class $\alpha$ determines a sublattice $T_\alpha \subseteq T_X$ of the transcendental lattice of $X$ which is a polarized Hodge structure, a twist of which is Hodge isometric to a primitive sublattice of the middle cohomology of one the three types of varieties above. See \S\ref{s: background} for more details. 

The Azumaya algebra $\calA$ of Theorem~\ref{thm: main} represents a class arising from a double cover of $\bP^2\times \bP^2$ ramified along a hypersurface of bidegree $(2,2)$.
\end{remark}

\begin{remarks} 
\label{rems: computations}
We record a few remarks on the computational subtleties behind the second part of Theorem~\ref{thm: main}:
\begin{enumerate}
\item For computational purposes, we go in a direction ``opposite'' to van Geemen: starting from one of the three types of varieties described in Remark~\ref{rem: vG summary}, defined over a number field $k$, we recover a K3 surface $X$ over $k$ of degree $2$, together with a $2$-torsion Azumaya algebra $\calA$. Unfortunately, there is no guarantee that $X$ has geometric Picard number $\rho = 1$; in fact, it need not.  We use a recent theorem of Elsenhans and Jahnel~\cite{ElsenhansJahnelOnePrime} to certify that our example has $\rho = 1$.
\item Curiously, one of the most delicate steps in the proof of Theorem~\ref{thm: main} is determining the primes of bad reduction of $X$.  We have to factor an integer with 318 decimal digits, whose smallest prime factor turns out to have 66 digits!
\item We use some of our work on varieties parametrizing maximal isotropic subspaces of families of quadrics admitting at worst isolated singularities to show that $\calA$ can ramify only at the real place, $2$-adic places and primes of bad reduction for $X$~\cite[\S3]{HVAV}.  These are thus  the only places where the local invariants of $\calA$ can be nontrivial.
\item We rely on recent work of Colliot-Th\'el\`ene and Skorobogatov~\cite{CTS-TAMS} to control the local invariants for the algebra $\calA$ at odd primes of bad reduction.
\end{enumerate}
\end{remarks}

\begin{remark}
The Azumaya algebra of Theorem~\ref{thm: main} looks remarkably similar to the algebra we used in~\cite{HVAV} to exhibit counter-examples to weak approximation.  This is not a coincidence: compare Theorem~\ref{thm: Stein} with~\cite[Theorem~5.1]{HVAV}.
\end{remark}

%\tony{connections with derived categories?}

\subsection*{Outline of the paper}
In~\S\ref{s: background} we explain the content of Remark~\ref{rem: vG summary} in detail, following van Geemen~\cite{vanGeemen}. The section is not logically necessary for the paper, but we include it for completeness because it explains how to construct, in principle, Azumaya algebras representing every two-torsion Brauer class on a general K3 surface of degree $2$. 

In~\S\ref{s: unramified}, we explain how to explicitly construct, from a double cover of $\bP^2\times \bP^2$ ramified along a hypersurface of bidegree $(2,2)$, a pair $(X,\alpha)$ where $X$ a K3 surface of degree $2$ and $\alpha \in \Br(X)[2]$ is an Azumaya algebra. We work mostly over a discrete valuation ring (see Theorem~\ref{thm: Stein}). This flexibility later affords us control, when working over number fields, of local invariants at places where $\alpha$ ramifies; see Lemma~\ref{lem:most invariants}. In~\S\ref{s: local invariants}, we give a collection of sufficient conditions to control the evaluation maps of $\alpha$ over number fields, specializing ultimately to the case $k = \bQ$. Notably, Proposition~\ref{prop:constant eval map} (due to Colliot-Th\'el\`ene and Skorobogatov) together with Lemma~\ref{lem:no cyclic covers} show that the evaluation maps of $\alpha$ are constant at non-2-adic finite places of bad reduction of $X$ whenever the singular locus consists of $r < 8$ ordinary double points.

We use this preparatory work to give an example in~\S\ref{s: an example} of a surface witnessing the second part of Theorem~\ref{thm: main}. In~\S\ref{s: computations} we give details of how we found the example of~\S\ref{s: an example}, using a computer.

\subsection*{Acknowledgements} We thank Olivier Wittenberg for helpful comments and correspondence on transcendental Brauer classes on general K3 surfaces of degree $2$~\cite{WittenbergLetter}. Our computations were carried out using Macaulay~2~\cite{M2}, {\tt Magma}~\cite{BCP} and SAGE~\cite{sage}

%****************************************************************************
\section{Lattices and Hodge theory}
\label{s: background}

In this section, all varieties are defined over $\bC$.  Our goal here is to outline van Geemen's geometric constructions representing two-torsion Brauer classes on a K3 of degree $2$ and Picard rank $1$.  Strictly speaking, this section is not logically necessary in the proof of Theorem~\ref{thm: main}, and we use only one of the three constructions described. We include it, however, so that readers not acquainted with these ideas get a clear sense of the geometric motivation behind Theorem~\ref{thm: main}.

Let $X$ be a complex K3 surface.  Regarding its middle cohomology as a
lattice with respect to the intersection form, we can write \cite[\S 1]{LP}
\begin{equation}
\label{eq: K3 lattice}
H^2(X,\bZ)\simeq U^3 \oplus E_8(-1)^2
\end{equation}
where 
\[
U = \langle e, f\rangle, \quad\textup{with intersections}\quad
\begin{array}{r|cc}
	& e   & f  \\
\hline
e       & 0   & 1  \\
f       & 1   & 0  
\end{array}
\]
and $E_8$ is the positive definite lattice arising from the corresponding root system,
i.e., the unique positive definite even unimodular lattice of rank eight.  
Let $e$ and $f$ denote the generators of the first summand $U$ in~\eqref{eq: K3 lattice}, and
$h\in H^2(X,\bZ)$ a primitive
vector with $h\cdot h=2d>0$.  The isomorphism~\eqref{eq: K3 lattice} can be chosen so that
\[
h\mapsto e+df.
\]
Writing $v=e-df$, we have
\[
h^{\perp}\simeq \bZ v \oplus  \Lambda' ,\qquad \textup{where }  \Lambda' := U^2 \oplus E_8(-1)^2.
\]

Let $(X,h)$ be a polarized K3 surface of degree $2d$, i.e., $h\cdot h=2d$;
assume that $\Pic(X)$ is generated by $h$.  Using the exponential sequence, two-torsion
elements of the Brauer group of $X$ may be interpreted as elements
$$\alpha \in H^2(X,\bZ/2\bZ)/\left<h\right>.$$
Under this identification, we can express
\[
\alpha=n \bar{f}+{\bar\lambda_{\alpha}}, \quad n=0,1,
\]
where $\bar{f}$ is the image of $f$ and ${\bar\lambda_{\alpha}}$ is the 
image of some  $\lambda_{\alpha} \in \Lambda'$.

Choose $\mu \in \Lambda'$ satisfying 
\[
\mu\cdot \lambda_{\alpha}\equiv 1\bmod{2}.
\]  
Using the non-degenerate cup product on $H^2(X,\bZ)$, consider
\[
\alpha^{\perp} \subset h^{\perp} \subset H^2(X,\bZ),
\]
where the first subgroup has index two when $\alpha \neq 0$.  
If $n=0$ then 
$$\alpha^{\perp}=\bZ v\oplus\{\lambda'\in \Lambda': \lambda'\cdot \lambda_{\alpha} \equiv 0\bmod{2} \},
$$
a lattice with discriminant group 
$(\bZ/2d\bZ)\oplus (\bZ/2\bZ)^2$, where the last two summands are generated
by $\lambda_{\alpha}/2$ and $\mu$.
If $n=1$ then
$$\alpha^{\perp}=\bZ (v+\mu)+ \{\lambda'\in\Lambda':\lambda'\cdot \lambda_{\alpha}
					\equiv 0\bmod{2} \},$$
a lattice with discriminant group generated by 
$(-v+2d\lambda_{\alpha})/4d,$
which is therefore $\bZ/8d\bZ$.  

General results on quadratic forms (see, for example, \cite{Nikulin})
make it possible to classify indefinite quadratic forms with prescribed
discriminant group $H$, provided the rank of the form is significantly
larger than the number of generators of $H$.
In particular, van Geemen \cite[Proposition~9.2]{vanGeemen}
classifies isomorphism classes of lattices $\alpha^\perp$ arising from this construction:
\begin{itemize}
\item{if $n=0$ there is a unique such lattice, up to isomorphism;}
\item{if $n=1$ and $d$ is even then there is a unique such lattice up
to isomorphism;}
\item{if $n=1$ and $d$ is odd then there are two such lattices up to
isomorphism, depending on the parity of 
$\lambda_{\alpha}\cdot \lambda_{\alpha}/2$.}
\end{itemize}

He goes further when $d=1$, offering geometric constructions of varieties
having primitive Hodge structure isomorphic to $\alpha^{\perp}$.  We
elaborate on his description: \\

\noindent {\bf Case $n=0$:}  Let $W\subset \bP^2 \times \bP^2$ denote a smooth 
hypersurface of bidegree $(2,2)$ and $Y\ra \bP^2 \times \bP^2$ the double
cover branched along $W$.  Let $h_1$ and $h_2$ denote the divisors on $Y$
obtained by pulling back the hyperplane classes from the factors.  We have
intersections:
$$\begin{array}{r|ccc}
        & h_1^2 & h_1h_2 & h_2^2 \\
\hline 
h_1^2  &  0     &   0    &    2  \\
h_1 h_2 & 0     &   2    &    0  \\
h_2^2   & 2     &   0    &    0
\end{array}
$$
The non-zero Hodge numbers of $Y$ are:
$$h^{00}=h^{44}=1, h^{11}=h^{33}=2, h^{13}=h^{31}=1, h^{22}=22.$$
Consider the weight-two Hodge structure
$$  \left<h_1^2,h_1h_2,h_2^2\right>^{\perp} \subset H^4(Y)(1)$$
having underlying lattice $M$, with respect to the intersection form.
A computation of the discriminant group of $M$ implies that
$$M\simeq \alpha^{\perp}.$$ 

\medskip

\noindent {\bf Case $n=1, \lambda_{\alpha}\cdot \lambda_{\alpha} \equiv 0\pmod{4}$:}
In this case, there exists a {\em primitive} embedding
$$\alpha^{\perp} \hookrightarrow U^3 \oplus E_8(-1)^2,$$
unique up to automorphisms of the source and target.  We can interpret
the image as the primitive cohomology of a polarized K3 surface $(S,f)$
with $f\cdot f=8$.  \\

\noindent {\bf Case $n=1, \lambda_{\alpha}\cdot \lambda_{\alpha} \equiv 2\pmod{4}$:}
Let $Y$ be a cubic fourfold containing a plane $P$, with hyperplane class $h$.  
We have the intersections:
$$\begin{array}{r|cc}
	& h^2 & P \\
\hline
h^2  & 3     & 1  \\
P    & 1     & 3 
\end{array}$$
The non-zero Hodge numbers of $Y$ are
$$h^{00}=h^{44}=1, h^{11}=h^{33}=1, h^{13}=h^{31}=1, h^{22}=21.$$
The weight-two Hodge structure
$$\left< h^2,P \right>^{\perp} \subset H^4(Y)(1)$$
has underlying lattice isomorphic to $\alpha^{\perp}$.  

\

The last two geometric constructions yield explicit unramified Azumaya algebras
over the degree two K3 surface.  The connection between cubic fourfolds containing
planes and quaternion algebras over the K3 surface can be found in \cite{HVAV};
the other construction goes back to Mukai \cite{Mukai}:
A degree eight K3 surface $S$ is generally a complete
intersection of three quadrics in $\bP^5$, and the discriminant curve of the corresponding
net is a smooth plane sextic.  Let $X$ be a degree-two K3 surface obtained as the double
cover of $\bP^2$ branched along this sextic.  The variety $\calF$ parametrizing
maximal isotropic subspaces of the
quadrics cutting out $S$ admits a morphism 
(cf. \cite[\S 3]{HVAV}) $\calF \ra X,$
which is smooth with geometric fibers isomorphic to $\bP^3$. 

In this paper, we focus on the first case, and use the resulting Azumaya algebra for arithmetic purposes.

%****************************************************************************
\section{Unramified conic bundles}
\label{s: unramified}

Let $k$ be an algebraically closed field of characteristic $\neq 2$, and let $W$ be a \defi{type $(2,2)$ divisor} on $\bP^2\times\bP^2$, that is, hypersurface of bidegree $(2,2)$.
%\[
%\bP^2\times\bP^2 = \Proj k[x_0,x_1,x_2] \times \Proj k[y_0,y_1,y_2].
%\]
The two projections $\pi_1\colon W \to \bP^2$ and $\pi_2\colon W \to \bP^2$ define  conic bundle structures on $W$ that are ramified, respectively, over plane sextic curves $C_1$ and $C_2$. Assume that $W$ has at worst isolated singularities. Let $Y \to \bP^2 \times \bP^2$ be the double cover branched along $W$. Composing this map with the projections onto the factors we obtain two quadric surface bundles $q_i\colon Y \to \bP^2$, also ramified over the curves $C_i$, for $i = 1, 2$, respectively. 

Let $\phi_i\colon X_i \to \bP^2$ be a double cover of $\bP^2$ ramified over $C_i$. 
If $C_i$ is smooth then $X_i$ is a K3 surface of degree $2$. 

\begin{lemma}
\label{lem:W smooth}
If $Y$ (equivalently, $W$) is not smooth then neither $C_1$ nor $C_2$ is smooth. On the other hand, if $Y$ is smooth, then the curve $C_i$ is singular if $q_i$ has a geometric fiber of rank $1$, $i = 1, 2$.
\end{lemma}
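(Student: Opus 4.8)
The plan is to work throughout with the symmetric-matrix presentation of $W$. Since the defining polynomial $f$ has bidegree $(2,2)$, I would write $f(x,y)=y^{\top}M(x)\,y=x^{\top}N(y)\,x$, where $M(x)$ (resp.\ $N(y)$) is a symmetric $3\times3$ matrix with entries quadratic in $x$ (resp.\ $y$). Then the fiber of $\pi_1$ over $x$ is the conic $\{y:y^{\top}M(x)y=0\}$, so $C_1=\{\det M(x)=0\}$ is exactly the sextic over which it degenerates, and likewise $C_2=\{\det N(y)=0\}$; the fiber of $q_1$ over $x$ is the quadric surface $Q_x\colon w^2=y^{\top}M(x)y$ in $\bP^3$, whose symmetric matrix is $\operatorname{diag}(-M(x),1)$ and hence has rank $1+\rk M(x)$. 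As the construction is symmetric in the two factors, it suffices to argue for $C_1$ and transport every conclusion to $C_2$ by interchanging $x\leftrightarrow y$ and $M\leftrightarrow N$. Finally, since $\Char k\neq2$, the double cover $w^2=f$ is singular precisely over the singular points of $\{f=0\}$, which gives the asserted equivalence ``$Y$ singular $\Leftrightarrow$ $W$ singular.''

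The one computational input I would isolate at the outset is the identity
\[
\partial_{x_i}\det M=\tr\!\big(\operatorname{adj}(M)\,\partial_{x_i}M\big),
\]
together with two standard facts about the adjugate of a symmetric $3\times3$ matrix: $\operatorname{adj}(M(x_0))=0$ as soon as $\rk M(x_0)\le1$, and $\operatorname{adj}(M(x_0))=\lambda\,vv^{\top}$ with $\lambda\neq0$ and $v$ spanning $\ker M(x_0)$ when $\rk M(x_0)=2$ (this uses $M\cdot\operatorname{adj}(M)=\det(M)\,I$ and that a symmetric rank-one matrix is a multiple of $vv^{\top}$). Because $C_1$ is singular at $x_0$ exactly when $\det M(x_0)=0$ and $\nabla_x\det M(x_0)=0$, the identity reduces every assertion to controlling $\tr(\operatorname{adj}(M(x_0))\,\partial_{x_i}M(x_0))$.

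The second assertion is then immediate. A geometric fiber of $q_1$ of rank $1$ forces $\rk M(x_0)\le1$ (indeed $\rk Q_{x_0}=1+\rk M(x_0)$), and the same hypothesis covers any point where the conic fiber of $\pi_1$ degenerates to rank $1$; in either case $\operatorname{adj}(M(x_0))=0$, so the identity gives $\nabla_x\det M(x_0)=0$, while $\det M(x_0)=0$ since $\rk M(x_0)<3$. Hence $C_1$ is singular at $x_0$, and symmetrically for $C_2$ and $q_2$. For the first assertion I would fix a singular point $(x_0,y_0)$ of $W$. The vanishing of the $y$-derivatives, $\partial_{y_j}f(x_0,y_0)=2\,(M(x_0)y_0)_j=0$, shows $M(x_0)y_0=0$, so $\rk M(x_0)\le2$ and $x_0\in C_1$; the vanishing of the $x$-derivatives reads $y_0^{\top}(\partial_{x_i}M(x_0))\,y_0=\partial_{x_i}f(x_0,y_0)=0$.

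I would then split on $\rk M(x_0)$. If $\rk M(x_0)\le1$ the adjugate vanishes and we conclude as before. The case with genuine content is $\rk M(x_0)=2$: here $\ker M(x_0)$ is one-dimensional and is spanned by $y_0$, so $\operatorname{adj}(M(x_0))=\lambda\,y_0y_0^{\top}$ and
\[
\partial_{x_i}\det M(x_0)=\lambda\,y_0^{\top}(\partial_{x_i}M(x_0))\,y_0=\lambda\,\partial_{x_i}f(x_0,y_0)=0 .
\]
Thus $\nabla_x\det M(x_0)=0$ and $C_1$ is singular at $x_0$; interchanging the factors shows $C_2$ is singular at $y_0$, proving that neither curve is smooth. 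The only delicate point is precisely this rank-$2$ step: one must recognize that the kernel vector $y_0$ produced by the $y$-derivatives is the very vector making $\operatorname{adj}(M(x_0))$ a rank-one form $\lambda y_0y_0^{\top}$, for this is what converts the vanishing of the $x$-derivatives of $f$ at the singular point into the vanishing of the gradient of $\det M$.
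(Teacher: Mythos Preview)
Your proof is correct, and it takes a genuinely different route from the paper's argument.

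The paper chooses local affine coordinates $(u_0,u_1)$ and $(v_0,v_1)$ centered at the images $c_1,c_2$ of a singular point of $W$, writes the defining equation in those coordinates, and observes that the singularity forces the linear-in-$v$ coefficients $c,e$ to vanish at the origin; it then expands the $3\times3$ determinant as $bce-ae^2-c^2d$ and notes that every term vanishes to order at least $2$ at $c_1$. For the second assertion it simply cites Beauville.

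Your argument is coordinate-free and rests on the Jacobi identity $\partial_{x_i}\det M=\tr(\operatorname{adj}(M)\,\partial_{x_i}M)$ together with the structure of $\operatorname{adj}(M)$ for a symmetric $3\times3$ matrix of rank $\le 2$. The key observation---that when $\rk M(x_0)=2$ the adjugate is $\lambda\,y_0y_0^{\top}$ with $y_0$ the kernel vector furnished by the $y$-derivatives---is exactly what links $\nabla_x f(x_0,y_0)=0$ to $\nabla_x\det M(x_0)=0$, and you identified this as the crux. This approach has the advantage of being uniform (no separate choice of coordinates, no case-by-case term inspection) and of proving the second assertion directly rather than by citation: a rank-$1$ fiber of $q_1$ gives $\rk M(x_0)=0$, hence $\operatorname{adj}(M(x_0))=0$, hence $C_1$ is singular at $x_0$. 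The paper's approach, by contrast, is more elementary in that it avoids any appeal to adjugate identities, at the cost of an explicit coordinate computation.
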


\begin{proof}
%Interpret $C_i$ as the discriminant divisor of the conic fibration
%$\pi_i:W \ra \bP^2$. 
%Over fields of characteristic zero, the multiplicity of the discriminant 
%at a point can be expressed in terms of the Milnor numbers of the singularities of the
%fiber and the total space over that point 
%\cite[2.8.3]{Te}.  However, we are not aware of a reference
%for this over fields of positive characteristic, so we prove the special case
%relevant to our application.  
An easy application of the Jacobian criterion shows that $Y$ is smooth if and only if $W$ is smooth. We use the later scheme to prove the remaining claims of the lemma.

Let $w \in W$ be a singular point, $c_i\in C_i$ its images under projection,
$(u_0,u_1)$ local coordinates of the first $\bP^2$ centered at $c_1$,
and $(v_0,v_1)$ local coordinates of the second $\bP^2$ centered at $c_2$.  
The defining equation of $W$ takes the form
\[
a(u_0,u_1)v_0^2+b(u_0,u_1)v_0v_1+2d(u_0,u_1)v_1^2+c(u_0,u_1)v_0+
e(u_0,u_1)v_1=0,
\]
where the coefficients are quadratic in $u_0$ and $u_1$, and 
$c(0,0)=e(0,0)=0$.  The defining equation for $C_1$ is therefore
\[
\det \left( \begin{matrix} 2a & b & c \\
			      b & 2d & e \\
			      c & e &  0 \end{matrix} \right)=0.
\]
Expanding this out, we get
$$bce-ae^2-c^2d=0,$$
where each term vanishes to order $\ge 2$ at $c_1=(0,0)$.  

The last statement of the lemma is a consequence of \cite[Prop.~1.2]{Beau}.
\end{proof}

\begin{theorem}
\label{thm: Stein}
Let $\calO$ denote a discrete valuation ring with residue field $\bF$,
of characteristic~$\neq~2$.  
Let $\calW$ be a type $(2,2)$ divisor in $\bP^2\times \bP^2$ flat over $\Spec\calO$,
and $\calY \ra \bP^2 \times \bP^2$ a
double cover simply branched along $\calW$.  Let $q_i\colon\calY \ra \bP^2$ denote the quadric
surface bundle obtained by projecting onto the $i$-th factor, and let
$\calC_i\subset \bP^2$ be its discriminant divisor.
Assume that $\calC_i$ is flat over $\calO$, and that $(\calC_i)_{\bF}$ is smooth for some $i \in \{1,2\}$. 

Let $r_i\colon \calF_i \to \bP^2$ be the relative variety of lines of $q_i$.  Then the Stein factorization
\[
r_i\colon \calF_i \to \calX_i \xrightarrow{\phi_i} \bP^2
\]
consists of a smooth $\bP^1$-bundle followed by a degree-two cover of $\bP^2$, which is a K3 surface.
\end{theorem}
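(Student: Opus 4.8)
The plan is to reduce the theorem to a statement about the degeneration type of the quadric surface bundle $q_i\colon \calY \to \bP^2_\calO$, and then to analyze the relative variety of lines fiber-by-fiber: first over the locus where the quadrics are smooth, and then over the discriminant $\calC_i$, where the two rulings coalesce.

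First I would pin down the ranks of the fibers of $q_i$. Writing $K = \Frac(\calO)$, flatness of $\calC_i$ over $\calO$ together with smoothness of the special fiber $(\calC_i)_\bF$ forces the generic fiber $(\calC_i)_K$ to be smooth as well, so both fibers of $\calC_i \to \Spec\calO$ are smooth plane sextics. A family of quadric surfaces in $\bP^3$ is given fiberwise by a $4\times 4$ symmetric matrix, and the locus of corank $\geq 2$ in the space of such matrices has codimension $3$; since the discriminant hypersurface $\{\det = 0\}$ is singular exactly along this corank $\geq 2$ locus, smoothness of $\calC_i$ shows that $q_i$ has fibers of rank exactly $3$ along $\calC_i$ and rank $4$ off $\calC_i$, with no fibers of rank $\leq 2$. (This is the input behind Lemma~\ref{lem:W smooth} via \cite[Prop.~1.2]{Beau}.) Thus $q_i$ has at worst simple, corank-$1$ degeneration along the smooth divisor $\calC_i$, in both fibers over $\calO$.

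Next I would treat $\calF_i \to \bP^2_\calO$ over the open complement $U = \bP^2_\calO \setminus \calC_i$. There the fibers of $q_i$ are smooth quadric surfaces $\isom \bP^1 \times \bP^1$, whose variety of lines is the disjoint union $\bP^1 \sqcup \bP^1$ of the two rulings. Hence $r_i|_U$ has two connected components in each fiber, and its Stein factorization produces an \'etale double cover $\calX_i|_U \to U$, the cover separating the two rulings, together with a smooth $\bP^1$-bundle $\calF_i|_U \to \calX_i|_U$. The main obstacle is the analysis along $\calC_i$. Here I would choose local coordinates over $\calO$ putting the quadric in the normal form of a rank-$3$ cone and compute the relative Fano scheme of lines, verifying three things by explicit local computation: that the Fano scheme of a rank-$3$ quadric cone is a reduced $\bP^1$ (the lines through the vertex, parametrized by the base conic); that the total space $\calF_i$ is smooth along $r_i^{-1}(\calC_i)$ precisely because the degeneration is only simple; and that $\calF_i \to \calX_i$ remains a smooth $\bP^1$-bundle while $\calX_i \to \bP^2_\calO$ acquires simple ramification exactly over $\calC_i$. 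This identifies $\phi_i\colon \calX_i \to \bP^2_\calO$ as a degree-$2$ morphism, flat by miracle flatness (finite and equidimensional over a regular base), \'etale over $U$, and simply ramified over the smooth divisor $\calC_i$ — the double cover of \S\ref{s: unramified}. The $\bP^1$-bundle $\calF_i \to \calX_i$ is a Brauer–Severi scheme of relative dimension one, the geometric source of the class $\calA$. I expect this local model, carried out simultaneously over the generic and special fibers of $\calO$, to be the delicate point, paralleling the treatment of maximal isotropic subspaces in \cite[\S3]{HVAV}.

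Finally I would conclude that $\calX_i$ is a K3 surface of degree $2$. Since $\phi_i$ is a double cover of $\bP^2$ branched along a smooth sextic in each fiber, adjunction gives $\omega_{\calX_i/\calO} \isom \phi_i^*\big(\omega_{\bP^2} \otimes \sO_{\bP^2}(3)\big) \isom \sO_{\calX_i}$, and the computation $(\phi_i)_*\sO_{\calX_i} = \sO_{\bP^2} \oplus \sO_{\bP^2}(-3)$ shows $H^1(\calX_i, \sO) = 0$ on each fiber; hence both the generic and special fibers of $\calX_i \to \Spec\calO$ are K3 surfaces, polarized of degree $2$ by $\phi_i^*\sO_{\bP^2}(1)$, as claimed.
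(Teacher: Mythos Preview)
Your argument is correct and rests on the same core idea as the paper's: control the corank of the fibers of $q_i$ (at most one, since $\calC_i$ is smooth), then read off the Stein factorization of the relative Fano variety of lines. The difference is only in packaging. The paper first invokes Lemma~\ref{lem:W smooth} to deduce smoothness of $\calW_\bF$ and $\calY_\bF$ from smoothness of $(\calC_i)_\bF$, hence regularity of $\calY$; it then argues by specialization that the generic fibers of $q_i$ also have at worst isolated singularities, and cites \cite[Proposition~3.3]{HVAV} as a black box for the Stein-factorization step. You instead lift smoothness of $(\calC_i)_\bF$ to the generic fiber directly via openness of the smooth locus, and propose to redo the local analysis of the Fano scheme along $\calC_i$ by hand, checking that the simple degeneration yields a smooth $\bP^1$-bundle over the discriminant double cover. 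Your route is more self-contained but longer; the paper's is shorter but leans entirely on the external reference for the substantive local computation you sketch.
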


\begin{proof}
By Lemma~\ref{lem:W smooth}, smoothness of $(\calC_i)_{\bF}$ implies smoothness of 
${\calW}_{\bF}$, and hence of ${\calY}_{\bF}$. The same lemma shows that the fibers 
of $(q_i)_\bF$ have at worst isolated singularities. On the other hand, the morphism $q_i\colon\calY \to \bP^2$ is flat, and thus $\calY$ is a regular scheme. Geometric fibers of $q_i$ over the generic point of $\Spec\calO$ with non-isolated singularities specialize to geometric fibers over the closed point with non-isolated singularities. Hence the fibers of $q_i$ have isolated singularities. The theorem now follows directly from \cite[Proposition 3.3]{HVAV}:  The Stein factorization of the variety of maximal isotropic
subspaces of a family of even-dimensional quadric hypersurfaces with (at worst)
isolated singularities is isomorphic to the discriminant double cover of the base. 

Since the morphism $\calC_i \to \bP^2$ is flat, smoothness of $(\calC_i)_\bF$ implies that $\calC_i$ is regular. Hence $\calX_i$ is a K3 surface over $\Spec\calO$.
\end{proof}

The smooth $\bP^1$-bundle $r_i\colon \calF_i \to \calX_i$ may be interpreted as a two-torsion element of $\Br(\calX_i)$. Without loss of generality, assume in the hypotheses of Theorem~\ref{thm: Stein} that $(\calC_1)_\bF$ is smooth; we omit the subscript $i = 1$ from here on in. We give an explicit quaternion algebra over $\kk(\calX)$ representing the Brauer class of $\calF \to \calX$.   Express
\[
\bP^2 \times \bP^2 = \Proj \calO[x_0,x_1,x_2] \times_{\calO} \Proj \calO[y_0,y_1,y_2]
\]
so the equation for $\calW$ takes the form
\begin{equation}
\label{eq: calW}
\begin{split}
0 & = A(x_0,x_1,x_2)y_0^2 + B(x_0,x_1,x_2)y_0y_1 + C(x_0,x_1,x_2)y_0y_2 \\
  &\quad + D(x_0,x_1,x_2)y_1^2 + E(x_0,x_1,x_2)y_1y_2 + F(x_0,x_1,x_2)y_2^2,
\end{split}
\end{equation}
for some homogeneous quadratic polynomials $A,\dots,F \in \calO[x_0,x_1,x_2]$. 
The coefficients are unique modulo multiplication by a unit in $\calO$.  

Consider the bigraded ring $\calO[x_0,x_1,x_2,y_0,y_1,y_2,v]$ where
$$\deg(x_i)=(1,0),\quad \deg(y_i)=(0,1), \quad \deg(v)=(1,1),$$
and let 
$$R:=\bigoplus_{n\in \bZ} \calO[x_0,x_1,x_2,y_0,y_1,y_2,v]_{(n,n)}$$ 
denote the graded subring generated by
elements of bidegree $(n,n)$ for some $n$.  
Then an equation for $\calY \subset \Proj R$
is 
\begin{equation}
\label{eq: calY}
\begin{split}
v^2 &= A(x_0,x_1,x_2)y_0^2 + B(x_0,x_1,x_2)y_0y_1 + C(x_0,x_1,x_2)y_0y_2 \\
  &\quad + D(x_0,x_1,x_2)y_1^2 + E(x_0,x_1,x_2)y_1y_2 + F(x_0,x_1,x_2)y_2^2,
\end{split}
\end{equation}
The quadric surface bundle $q\colon \calY \to \bP^2$ is ramified over the curve
\begin{equation}
\label{eq: C1}
\calC : \quad
\det\begin{pmatrix}
2A & B & C & 0 \\
B & 2D & E & 0 \\
C & E & 2F & 0 \\
0 & 0 & 0 & -2
\end{pmatrix} = 0.
\end{equation}
Thus, after rescaling, the K3 surface $\calX$ is described by the hypersurface
\begin{equation}
\label{eq: K3}
w^2 = -\frac{1}{2}\cdot
\det(M),
\end{equation}
in $\bP(1,1,1,3)$, where $M \in \Mat_3(\calO[x_0,x_1,x_2])$ is the leading $3\times 3$ principal minor of the matrix in~\eqref{eq: C1}.

The discussion in \cite[\S 3.3]{HVAV} shows that the generic fiber of the map $\calF \to \calX$ is the Severi-Brauer conic in $\Proj\kk(\calX)[Y_0,Y_2,Y_2]$ given by
\begin{equation}
\label{eq:conic bundle}
AY_0^2 + BY_0Y_1 + CY_0Y_2 + DY_1^2 + EY_1Y_2 + FY_2^2 = 0.
\end{equation}
Essentially, given a smooth quadric surface whose discriminant
double cover is split, each component of the variety of lines on the surface
is isomorphic to a smooth hyperplane section of the surface.  
Let
\[
M_A := 4DF - E^2,\quad 
M_D := 4AF - C^2,\quad\text{and}\quad
M_F := 4AD - B^2.
\]
Completing squares in~\eqref{eq:conic bundle}, and renormalizing, we obtain
\[
Y_0^2 = -\frac{M_F}{4A^2}Y_1^2 - \frac{\det(M)}{2A\cdot M_F}Y_2^2.
\]
Hence, by~\cite[Corollary~5.4.8]{GilleSzamuely}, the conic \eqref{eq:conic bundle} corresponds to the Hilbert symbol
\begin{equation}
\label{eq:quaternion algebra}
\left(-\frac{M_F}{4A^2},-\frac{\det(M)}{2A\cdot M_F}\right).
\end{equation}
Write $\calA$ for the class of this symbol in $\Br(\kk(\calX))$; $\calA$ is unaffected by multiplication by squares in either entry of a representative symbol. Since $-\frac{1}{2}\det(M)$ is a square in $\kk(\calX)^\times$, we see that 
\[
(-M_F,A\cdot M_F) = (-M_F,A)
\]
is another representative of $\calA$ (the equality uses the multiplicativity of the Hilbert symbol and the relation $(-M_F,M_F) = 1$~\cite[III, Proposition~2]{Serre}). Here we have the usual abuse of notation: the entries are not rational functions, though they are homogeneous polynomials of even degree. 

Depending on how we complete squares and renormalize~\eqref{eq:conic bundle}, we may obtain several representatives of $\calA$:
\begin{equation}
\label{eq:representatives}
\begin{split}
(-M_F,A),\quad &(-M_D,A),\quad (-M_F,D),\\
(-M_A,D),\quad &(-M_D,F),\quad (-M_A,F).
\end{split}
\end{equation}

\begin{proposition}
\label{prop: brauer elt}
Let $X$ be a K3 surface of degree $2$ over a number field $k$, given as a sextic in the weighted projective space $\bP(1,1,1,3) = \Proj k[x_0,x_1,x_2,w]$ of the form
\begin{equation*}
%\label{eq: explicit K3}
w^2 = -\frac{1}{2}\cdot
\begin{pmatrix}
2A & B & C \\
B & 2D & E \\
C & E & 2F
\end{pmatrix},
\end{equation*}
where $A,\dots,F \in k[x_0,x_1,x_2]$ are homogenous quadratic polynomials. Then the class $\calA$ of the quaternion algebra $(B^2 - 4AD,A)$ in $\Br(\kk(X))$ extends to an element of $\Br(X)$.
\end{proposition}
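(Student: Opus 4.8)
The plan is to prove that $\calA$ extends to $\Br(X)$ by checking that it is unramified at every codimension-one point of the smooth surface $X$, and then invoking Grothendieck's purity theorem (Auslander--Goldman for a regular surface): a class in $\Br(\kk(X))$ lies in the subgroup $\Br(X)$ if and only if all of its residues $\partial_Z\colon \Br(\kk(X)) \to H^1(\kappa(Z),\bQ/\bZ)$ vanish, $Z$ ranging over prime divisors of $X$. Since $B^2 - 4AD = -M_F$ in the notation preceding~\eqref{eq:representatives}, the algebra in question is the representative $(-M_F, A)$ of $\calA$, and I would compute its residues through the tame symbol $\partial_Z(f,g) = (-1)^{v_Z(f)v_Z(g)}\,\overline{f^{v_Z(g)}\!/g^{v_Z(f)}} \in \kappa(Z)^\times/\kappa(Z)^{\times 2}$, freely switching among the six representatives in~\eqref{eq:representatives} to pick the most convenient one at each $Z$. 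The first reduction is immediate: for $(-M_F,A)$ both entries are units away from $\{A=0\}\cup\{M_F=0\}$, so only prime divisors lying in one of these two loci can contribute. I would also record at the outset that $X$ being a K3 surface forces the branch sextic $\{\det M = 0\}$ (over which the double cover $w^2 = -\tfrac12\det M$ ramifies) to be smooth, hence irreducible.

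On a component $Z \subseteq \{A=0\}$, the relation $M_F = B^2 - 4AD$ shows $-M_F \equiv B^2 \pmod{A}$, so $-M_F$ restricts to the square $B^2$ on $Z$; as long as $B\not\equiv 0$ on $Z$ the residue of $(-M_F,A)$ is a square, hence trivial. If $B \equiv 0$ on $Z$ I would switch to the representative $(-M_D,A)$ and use $M_D \equiv -C^2 \pmod{A}$ in the same way. The only residual case is $A \equiv B \equiv C \equiv 0$ on $Z$, which would force the image of $Z$ in $\bP^2$ to be a curve contained both in the conic $\{A=0\}$ and in the irreducible sextic $\{\det M = 0\}$ --- impossible on degree grounds. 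So $\calA$ is unramified along every component of $\{A=0\}$.

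On a component $Z \subseteq \{M_F = 0\}$ the triviality is not visible from the symbol, and here I would bring in the adjugate identity $M_D M_F - (2AE - BC)^2 = 2A\,\det M$, which is the $(1,1)$-entry of $\mathrm{adj}(\mathrm{adj}\,M) = \det(M)\cdot M$. Combined with the defining relation $\det M = -2w^2$ of $X$ from~\eqref{eq: K3}, this yields $(2AE - BC)^2 = 4Aw^2$ on $Z$, so $A$ restricts to the square $\bigl((2AE-BC)/2w\bigr)^2$ in $\kappa(Z)$, provided $w \not\equiv 0$ on $Z$. Since $\{M_F = 0\}$ is a quartic and the branch curve $\{w=0\}\cong\{\det M = 0\}$ is an irreducible sextic, no component of $\{M_F=0\}$ can lie in $\{w=0\}$; thus $w \not\equiv 0$ on $Z$, the function $A$ is a square on $Z$, and the residue of $(-M_F,A)$ vanishes there too. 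With both loci handled, purity gives $\calA \in \Br(X)$.

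The main obstacle is precisely the locus $\{M_F = 0\}$: unlike $\{A=0\}$, where triviality drops out of the congruence $-M_F\equiv B^2$, here one must produce the cofactor identity and marry it to the K3 relation $\det M = -2w^2$ to see that $A$ becomes a square on $Z$. The one hypothesis that must be watched carefully in that step is that $w$ does not vanish identically on $Z$, and this is exactly the point where smoothness --- equivalently irreducibility --- of the branch sextic, guaranteed by $X$ being a K3 surface, is essential; without it the argument genuinely fails.
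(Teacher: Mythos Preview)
Your argument is correct, and it is a genuinely different proof from the one in the paper. The paper deduces the proposition from Theorem~\ref{thm: Stein}: working over a DVR $\calO$ at a place of good reduction, the relative variety of lines $\calF\to\calX$ of the quadric surface bundle $q\colon\calY\to\bP^2$ is a smooth $\bP^1$-bundle (this is the content of Theorem~\ref{thm: Stein}, which in turn invokes \cite[Prop.~3.3]{HVAV}), hence already an element of $\Br(\calX)$; the identification of this class with $(B^2-4AD,A)$ is then read off from the Severi-Brauer conic~\eqref{eq:conic bundle}. In other words, the paper proves unramifiedness \emph{geometrically}, by exhibiting an everywhere-defined conic bundle representing $\calA$.

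You instead give a direct, purely algebraic verification via purity and tame symbols. The case $Z\subseteq\{A=0\}$ is dispatched by the obvious congruences $-M_F\equiv B^2$, $-M_D\equiv C^2\pmod A$, together with the observation that $A\equiv B\equiv C\equiv 0$ on $Z$ would force the image of $Z$ into the irreducible branch sextic. The case $Z\subseteq\{M_F=0\}$ is handled by the cofactor identity $M_DM_F-(2AE-BC)^2=2A\det M$ (the $(1,1)$-entry of $\mathrm{adj}(\mathrm{adj}\,M)=\det(M)\cdot M$), which on $X$ becomes $(2AE-BC)^2\equiv 4Aw^2$ along $Z$ and exhibits $A|_Z$ as a square; irreducibility of the sextic again rules out $w\equiv 0$ on $Z$. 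This is complete and correct.

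The trade-offs: your route is more elementary and self-contained --- it needs no input from \cite{HVAV} and no quadric-bundle geometry --- and it works over any base field of characteristic $\neq 2$ once the branch sextic is smooth. The paper's route, by contrast, is what makes the later Lemma~\ref{lem:most invariants} go through painlessly: Theorem~\ref{thm: Stein} is stated over a DVR precisely so that $\calA$ can be spread out to $\Br(\calX)$ at primes of good reduction, whereas your residue computation lives only over the generic fibre and would need to be redone integrally to serve that purpose.
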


\begin{proof}
Let $\calO$ be the valuation ring at some finite place of $k$ where $X$ has good reduction. The proposition follows directly from Theorem~\ref{thm: Stein} and the subsequent discussion, keeping track of what is happening over the generic point of $\Spec\calO$. Indeed, define $\calW$ and $\calY$, respectively, by~\eqref{eq: calW} and~\eqref{eq: calY}. The resulting curve $(\calC_1)_k$ is the branch curve of the double cover $X \to \bP^2$, which is smooth because $X$ is a K3 surface, by hypothesis.
\end{proof}

\begin{remark}
The assortment of quaternion algebras~\eqref{eq:representatives} representing the class $\calA$ of Proposition~\ref{prop: brauer elt} is useful for the computation of the invariant map on the image of the evaluation map $\textrm{ev}_\calA\colon X(\bA) \to \bigoplus_v \Br(k_v), (P_v) \mapsto \big(\calA(P_v)\big)$. The industrious reader can check that at every local point of $X$, either the first, fourth of fifth representatives in $\calA$ in our list is well-defined; we shall not use this observation directly.
\end{remark}

%****************************************************************************
\section{Local Invariants}
\label{s: local invariants}

Let $X$ be a smooth projective geometrically integral variety over a number field $k$. For $\calS \subseteq \Br(X)$, let
\[
X(\Adeles)^\calS := \left\{ (P_v) \in X(\Adeles) : \sum_v \inv_v \calA(P_v) = 0 \textup{ for all }\calA \in \calS \right\}.
\]
The inclusion $X(k) \subseteq X(\Adeles)^\calS$ follows from class field theory.  See~\cite[\S5.2]{SkorobogatovBook} for details. The local invariants $\inv_v\calA(P_v)$ can be nonzero only at a finite number of places: the archimedean places of $k$, the places of bad reduction of $X$, and places where the class $\calA$ is ramified.

We begin this section by explaining how recent work of Colliot-Th\'el\`ene and Skorobogatov~\cite{CTS-TAMS} shows that local invariants are \emph{constant} at certain finite places $v$ of bad reduction for $X$ where the singular locus satisfies a technical hypothesis. Specializing to the case where $X$ is a K3 surface over a number field $k$ as in Proposition~\ref{prop: brauer elt}, this technical hypothesis is satisfied provided the singular locus at $v$ consists of $r < 8$ ordinary double points (Lemma~\ref{lem:no cyclic covers}). 

We then show that the class $\calA$ of Proposition~\ref{prop: brauer elt} can ramify only over infinite places, $2$-adic places, and places of bad reduction for $X$. Finally, in the special case $k = \bQ$, we give sufficient conditions for local invariants of $\calA$ to be trivial at $2$-adic points and nontrivial at real points.

%%%%%%%%%%%%%%%%%%%%%%
\subsection{Places of bad reduction with mild singularities}

In this section we use the following notation: $k$ is a finite extension of $\bQ_p$ with a fixed algebraic closure $\kbar$, $\calO$ denotes the ring of integers of $k$, and $\bF$ denotes its residue field. We let $X$ be a smooth, proper, geometrically integral variety over $k$ and write $\pi\colon\calX \to \Spec\calO$ for a flat proper morphism with $X = \calX\times_\calO k$.

The following proposition is a straightforward refinement of~\cite[Proposition~2.4]{CTS-TAMS}, using ideas in the remark on the case of bad reduction in~\cite[\S2]{CTS-TAMS}. We include the details here for the reader's convenience.

\begin{proposition}%[Colliot-Th\'el\`ene, Skorobogatov]
\label{prop:constant eval map}
Let $\ell\neq p$ be a prime.  Assume that $\calX$ is regular with geometrically integral fibers over $\Spec\calO$, and that the smooth locus $\calX_\bF^{\sm}$ of the closed fiber is irreducible and has no connected unramified cyclic geometric coverings of degree $\ell$. If $X(k) \neq \emptyset$, then, for $\calA \in \Br(X)\{\ell\}$, the image of the evaluation map $\ev_\calA\colon X(k) \to \Br(k)$ consists of one element.
\end{proposition}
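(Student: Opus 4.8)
The plan is to prove that $\inv_k\calA(P)$ equals, for every $P\in X(k)$, the value at the reduction $\bar P$ of a single cohomology class on the special fiber, and then to use the hypotheses to force that class to be pulled back from $\bF$ and hence to give the same value at every point. Throughout, $\ell\neq p$ lets us work with the coefficients $\bQ_\ell/\bZ_\ell$ and with absolute cohomological purity.

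First I would extend points to the model. Properness of $\calX\to\Spec\calO$ and the valuative criterion turn each $P\in X(k)$ into a section $\tilde P\colon\Spec\calO\to\calX$; let $\bar P\in\calX_\bF(\bF)$ be the image of the closed point. Since the section splits the structure map, the uniformizer $\pi$ has nonzero image in $\mathfrak m_{\bar P}/\mathfrak m_{\bar P}^2$, so in the regular local ring $\calO_{\calX,\bar P}$ it is part of a regular system of parameters; hence $\calX_\bF=V(\pi)$ is regular at $\bar P$, and as $\kappa(\bar P)=\bF$ we get $\bar P\in\calX_\bF^{\sm}(\bF)$. Thus every $k$-point reduces into the smooth locus of the closed fiber. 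Note also that the non-smooth locus lies in $\calX_\bF$ as a proper closed subset, hence has codimension $\ge 2$ in $\calX$, so $\calX_\bF^{\sm}=\calX_\bF$ up to lower-dimensional loci and is irreducible by hypothesis.

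Next I would produce the class on the special fiber. Let $\calX^{\sm}$ be the open (regular) smooth locus of $\pi$, with $X=\calX^{\sm}\setminus\calX^{\sm}_\bF$ and $\calX^{\sm}_\bF$ a smooth divisor. Absolute purity for the regular pair $(\calX^{\sm},\calX^{\sm}_\bF)$ gives a residue (Gysin) map $\partial\colon\Br(X)\{\ell\}\to\HH^1(\calX^{\sm}_\bF,\bQ_\ell/\bZ_\ell)$ (suppressing Tate twists), and I set $\xi:=\partial(\calA)$, a global class on the smooth special fiber. The crucial compatibility is functoriality of purity under the section $\tilde P$, which meets $\calX^{\sm}_\bF$ transversally, exactly at $\bar P$ with multiplicity one (because $\pi$ is a parameter at $\bar P$): pulling the localization sequence back along $\tilde P$ identifies $\partial_\calO(\tilde P^*\calA)$ with $\bar P^*\xi$. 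Since $\tilde P^*\calA$ has generic value $\calA(P)\in\Br(k)$ and, for finite $\bF$, the residue $\partial_\calO\colon\Br(k)\{\ell\}\to\HH^1(\bF,\bQ_\ell/\bZ_\ell)$ is precisely $\inv_k$, this yields
\[
\inv_k\calA(P)=\xi(\bar P)\in\HH^1(\bF,\bQ_\ell/\bZ_\ell).
\]

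Finally I would feed in the cohomological hypotheses. The absence of connected unramified cyclic geometric $\ell$-coverings of $\calX^{\sm}_\bF$ says $\pi_1^{\ab}(\overline{\calX^{\sm}_\bF})$ has no $\ell$-quotient, so $\HH^1(\overline{\calX^{\sm}_\bF},\bQ_\ell/\bZ_\ell)=0$; combined with irreducibility (which gives geometric connectedness once the $\bF$-point $\bar P$ is available), the Hochschild--Serre sequence collapses to $\HH^1(\calX^{\sm}_\bF,\bQ_\ell/\bZ_\ell)\cong\HH^1(\bF,\bQ_\ell/\bZ_\ell)$, i.e. $\xi$ is pulled back from $\Spec\bF$. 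A constant class takes the same value at every $\bF$-point, so $\xi(\bar P)$ is independent of $P$; hence $\inv_k\calA(P)$ is constant and $\ev_\calA(X(k))$ is a single element. The main obstacle I expect is the middle step: rigorously matching the global Gysin residue of $\calA$ along the special fiber with the arithmetic residue computing $\inv_k\calA(P)$, i.e. the functoriality of absolute purity under the transverse section $\tilde P$, where one must verify intersection multiplicity one at $\bar P$ and the compatibility of the purity isomorphisms with pullback. This is exactly the content imported from~\cite[Proposition~2.4]{CTS-TAMS} together with its bad-reduction remark, and it is where the hypothesis $\ell\neq p$ is essential.
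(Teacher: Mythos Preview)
Your argument is correct and follows the same overall architecture as the paper's proof: both identify the residue of $\calA$ along the smooth locus of the special fiber as a class in $H^1(\calX_\bF^{\sm},\bQ_\ell/\bZ_\ell)$, then use the Hochschild--Serre sequence together with the hypothesis on cyclic covers to force this class down to $H^1(\bF,\bQ_\ell/\bZ_\ell)$, and finally invoke local class field theory to conclude. The technical packaging differs in two places. First, to land the residue in $H^1$ of the smooth special fiber rather than just its function field, the paper routes through Kato's complex and Zariski--Nagata purity, whereas you invoke absolute purity for the regular pair $(\calX^{\sm},\calX_\bF^{\sm})$ directly; your route is a bit more streamlined. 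Second, and more substantively, the paper concludes by subtracting a constant class $\alpha\in\Br(k)$ with matching residue and appealing to Gabber's purity to extend $\calA-\alpha$ over the \emph{whole} regular model $\calX$, then evaluating via $\Br(\calO)=0$; you instead stay on the smooth locus and use functoriality of the Gysin residue under the transverse section $\tilde P$ to identify $\inv_k\calA(P)$ with $\xi(\bar P)$. Your approach trades Gabber's purity for this compatibility of residues under base change, which is a genuine (if standard) input you correctly flag as the delicate point. Both routes are valid; the paper's has the mild advantage of producing an explicit constant $\alpha$ such that $\calA-\alpha$ extends to $\Br(\calX)$, which can be useful elsewhere.
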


\begin{proof}
Let $\calZ$ be the largest open subscheme of $\calX$ that is smooth over $\Spec\calO$; note that $\calZ\times_\calO k = X$. Write $\calZ_\bF$ for its closed fiber, and note that $\calZ_\bF = \calX_\bF^{\sm})$. Let $\calZ_\bF^{(1)}$ denote the set of closed integral subvarieties of $\calZ_\bF$ of codimension $1$. In~\cite[Prop.\ 1.7]{Kato}, Kato shows there is a complex
\[
\Br(X)[\ell^n] \xrightarrow{\res} H^1\big(\kk(\calZ_\bF),\bZ/\ell^n\bZ\big) \to \bigoplus_{Y\subset \calZ_\bF^{(1)}} H^0\big(\kk(Y),\bZ/\ell^n\bZ(-1)\big).
\]
(In Kato's notation, take $q = -1$, $i = -2$, $n \mapsto \ell^n$, and $X \mapsto \calZ$.) We claim that for $\calA \in \Br(X)[\ell^n]$, the residue $\res(\calA) \in H^1\big(\kk(\calZ_\bF),\bZ/\ell^n\bZ\big)$ lies in the subgroup $H^1\big(\calZ_\bF,\bZ/\ell^n\bZ\big)$. Indeed, the group $H^1\big(\kk(\calZ_\bF),\bZ/\ell^n\bZ\big)$ classifies  connected cyclic covers of $\calZ_\bF$. By Kato's complex, the cover $\calW \to \calZ_\bF$ corresponding to $\res(\calA)$ is unramified in codimension one, and hence, by the Zariski-Nagata purity theorem~\cite[Expos\'e X, Th\'eor\`eme 3.1]{SGAI}, $\calW \to \calZ_\bF$ is unramified.

The long exact sequence of low degree terms associated to the spectral sequence
\[
E_2^{p,q} := H^p\big(\bF,H^q_{\et}\big(\calZ_{\overline{\bF}},\bZ/\ell^n\bZ\big)\big) \implies
H^{p+q}_{\et}(\calZ_\bF,\bZ/\ell^n\bZ)
\]
starts as follows:
\[
0 \to H^1\big(\bF,\bZ/\ell^n\bZ\big) \to H^1\big(\calZ_\bF,\bZ/\ell^n\bZ\big)
\to H^1\big(\calZ_{\overline{\bF}},\bZ/\ell^n\bZ\big)
\]
Since, by hypothesis, $\calZ_\bF$ has no connected unramified cyclic geometric coverings of degree $\ell$, we have $H^1\big(\calZ_{\overline{\bF}},\bZ/\ell^n\bZ\big) = 0$.

Local class field theory shows that the residue map $\Br(k)[\ell^n] \to H^1\big(\bF,\bZ/\ell^n\bZ\big)$ is an isomorphism.  We conclude that for any $\calA \in \Br(X)\{\ell\}$, there exists $\alpha \in \Br(k)\{\ell\}$ such that $\calA - \alpha$ has trivial residues along any codimension one subvariety of $\calX$. By Gabber's purity theorem~\cite{Fujiwara}, it follows that $\calA - \alpha \in \Br(\calX)\{\ell\} \subseteq \Br(X)\{\ell\}$.

A valuation argument shows that $X(k) = \calX(\calO) = \calZ(\calO)$; see~\cite[proof of Lemma~1.1(b)]{SkorobogatovAJM}. Since $\Br(\calO) = 0$, we conclude that the images of the evaluation maps $\ev_\calA$ and $\ev_\alpha$ in $\Br(k)$ coincide; the latter consists of one element.
\end{proof}

\begin{lemma}
\label{lem:no cyclic covers}
Suppose that $p\neq 2$. Let $X$ be a K3 surface defined over $k$, and let $\pi\colon\calX \to \Spec\calO$ be a flat proper morphism from a regular scheme with $X = \calX\times_\calO k$. Assume that the singular locus of the closed fiber $\calX_0:=\calX_{\overline{\bF}}$ has $r<8$ points, each of which is an ordinary double point. Then the smooth locus $U\subset \calX_0$ has no connected unramified cyclic covers of prime degree $\ell \neq p$.
\end{lemma}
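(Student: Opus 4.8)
The plan is to interpret the statement cohomologically and then reduce it to a lattice computation on a smooth K3 surface. A connected unramified cyclic cover of $U$ of prime degree $\ell$ is the same thing as a nonzero class in $H^1_\et(U,\bZ/\ell\bZ)$ (equivalently a surjection $\pi_1(U)\surjects\bZ/\ell\bZ$), so the assertion is equivalent to $H^1_\et(U,\bZ/\ell\bZ)=0$ for every prime $\ell\neq p$. I would first pass to the minimal resolution $\rho\colon S\to\calX_0$ of the $r$ ordinary double points. Each such point is an $A_1$, hence canonical (crepant), singularity, and $\calX_0$ is a Gorenstein surface with trivial dualizing sheaf (restrict the relative dualizing sheaf of the regular total space $\calX$, which is trivial on the K3 generic fibre) and with $H^1(\calX_0,\calO)=0$ (by flatness $\chi(\calO)=2$, while $h^0=h^2=1$ on the fibre). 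Hence $S$ is a smooth K3 surface over $\overline{\bF}$. Writing $E=E_1\cup\cdots\cup E_r$ for the exceptional locus, the $E_i$ are pairwise disjoint smooth rational $(-2)$-curves and $\rho$ restricts to an isomorphism $S\setminus E\isomto U$.

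Next I would run the Gysin sequence for the smooth divisor $E\hookrightarrow S$ with $\bZ/\ell\bZ$-coefficients. A K3 surface over an algebraically closed field is simply connected, so $H^1_\et(S,\bZ/\ell\bZ)=0$ and the sequence collapses to the left-exact sequence
\[
0\to H^1_\et(U,\bZ/\ell\bZ)\to\bigoplus_{i=1}^r \bZ/\ell\bZ \xrightarrow{\ (c_i)\mapsto \sum_i c_i[E_i]\ } H^2_\et(S,\bZ/\ell\bZ),
\]
the right-hand arrow being the cycle-class (Gysin push-forward) map. Thus $H^1_\et(U,\bZ/\ell\bZ)$ is exactly the group of mod-$\ell$ linear relations among the classes $[E_1],\dots,[E_r]$, and it suffices to show these classes are linearly independent in $H^2_\et(S,\bZ/\ell\bZ)$.

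For $\ell$ odd this is immediate from the intersection pairing: if $\sum_i c_i[E_i]=0$, cupping with $[E_j]$ and using $[E_i]\cdot[E_j]=-2\delta_{ij}$ gives $-2c_j=0$ in $\bZ/\ell\bZ$, whence $c_j=0$ since $\ell\neq2$. The case $\ell=2$ is allowed (as $p\neq2$) and is the crux, for here the mod-$2$ intersection form vanishes on the $[E_i]$ and the above argument fails. Instead I would lift to the integral picture, using that $H^2_\et(S,\bZ_2)$ is a torsion-free even unimodular lattice, isometric to the K3 lattice $U^3\oplus E_8(-1)^2$ tensored with $\bZ_2$. A relation $\sum_{i\in I}[E_i]\equiv0$ in $H^2_\et(S,\bZ/2\bZ)$ then means $\sum_{i\in I}[E_i]\in 2\,H^2_\et(S,\bZ_2)$, i.e. $I$ is an \emph{even set} of disjoint nodal curves. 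By Nikulin's theorem on even sets of disjoint $(-2)$-curves on a K3 surface, a nonempty even set has cardinality $8$ or $16$; since $r<8$, the only even set is $I=\emptyset$, so the $[E_i]$ are independent modulo $2$ and $H^1_\et(U,\bZ/2\bZ)=0$.

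The main obstacle is precisely the $\ell=2$ case: the reduction to, and invocation of, Nikulin's even-set theorem, which is what pins the numerical threshold to $r<8$. A secondary point that must be nailed down is that the closed fibre is a K3 surface with du Val singularities, so that its minimal resolution is genuinely a smooth K3 (hence simply connected with torsion-free unimodular $H^2_\et$); this is where regularity of $\calX$ and triviality of the relative dualizing sheaf are used.
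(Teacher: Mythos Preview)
Your approach is correct and genuinely different from the paper's. Both proofs pass to the minimal resolution $S\to\calX_0$, establish that $S$ is a K3 surface, and split into $\ell$ odd versus $\ell=2$. For odd $\ell$, you use the Gysin sequence and the intersection pairing (nondegenerate on the $[E_i]$ since $-2\in(\bZ/\ell\bZ)^\times$); the paper instead compares $\Pic(U)$ with $\Pic(\calX_0)$ via the local class groups of $A_1$-points and then runs a specialization argument through a semistable model over a ramified quadratic extension to show $\Pic(\calX_0)[\ell]=0$. Your route here is shorter. For $\ell=2$, you invoke Nikulin's theorem on even sets of nodal curves; the paper instead reproves the relevant fragment from scratch---any connected \'etale double cover of $U$ normalizes to a double cover of $S$ branched over a subset of the exceptional curves of size $s\equiv 0\pmod 4$, and $s=4$ is then ruled out by a direct Riemann--Roch contradiction ($\chi(\calO_S(-D))=1$ while $h^0=h^2=0$). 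Your version is more conceptual but outsources the key step; the paper's is self-contained.

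One point to make explicit in your write-up: the step ``$\sum_{i\in I}[E_i]\in 2\,H^2_\et(S,\bZ_2)$, i.e.\ $I$ is an even set'' needs the observation that $2$-divisibility in $H^2$ already forces $2$-divisibility in $\Pic(S)$, since Nikulin's proof (via the branched double cover) requires the half-class to be an honest line bundle. This follows from the injection $\Pic(S)/2\hookrightarrow H^2_\et(S,\mu_2)$ given by the Kummer sequence; note that a purely lattice-theoretic argument in $H^2_\et(S,\bZ_2)$ alone only yields $|I|\equiv 0\pmod 4$ and cannot by itself exclude $|I|=4$.
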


\begin{proof}
Consider an algebraically closed field $F$ of characteristic
different from $\ell$.  Let $Y$ be a separated integral scheme over $F$ with $\Gamma(Y,{\sO}^*_Y)=F^*$;
this is the case if $Y$ is proper, or a dense open subset of a proper scheme with
complement of codimension $\ge 2$.  
Then degree $\ell$ cyclic \'etale covers of $Y$ are classified by $H^1_\et(Y,\mu_{\ell})$
\cite[ch.III]{Milne}.  
The Kummer exact sequence \cite[p.125]{Milne} implies that $H^1_\et(Y,\mu_{\ell})=\Pic(Y)[\ell]$,
the $\ell$-torsion subgroup.  

Combining the canonical homomorphism from the
Picard group to the Weil class group and the restriction homomorphism 
on class groups yields
$$\Pic(\calX_0) \subset \Cl(\calX_0) \simeq \Cl(U) \simeq  \Pic(U).$$
The quotient $\Pic(U)/\Pic(\calX_0)$ is two-torsion.  Indeed, ordinary double points
are \'etale locally isomorphic to quadric cones, whose local class group 
equals $\bZ/2\bZ$ (generated by the ruling).   Thus for each closed point $x\in \calX_0$,
the quotient $\Cl(\Spec \sO_{\calX_0}\setminus \{x\})/\Cl(\Spec \sO_{\calX_0})$ is annihilated by two
\cite[\S 14]{Lipman}.  

If $\ell \neq 2$ then this computation shows that $\Pic(U)[\ell]=\Pic(\calX_0)[\ell]$, whence
degree $\ell$ cyclic \'etale covers of $U$ extend to $\calX_0$.  Consider, the specialization
homomorphism \cite[\S 20.3]{Fulton}
$$\Pic(X_{\overline{k}}) \ra \Pic(\calX_0).$$
We claim this is injective and the cokernel has torsion annihilated by $p$;
this implies that $\Pic(\calX_0)[\ell]=0$.  

To prove the claim, replace $k$ by the ramified quadratic extension $k'$ with ring of integers $\calO'$,
so that 
$\calX'=\calX \times_{\calO} \calO'$ is singular over the double points of $\calX_0$.  
Concretely, given $\pp$ a uniformizer of $\calO$, $\pp'=\sqrt{\pp}$
the corresponding uniformizer of $\calO'$, and $x\in \calX_0$ an ordinary double point,
then the \'etale local equation of $\calX$
$$\pp=uv+w^2$$
pulls back to
$$(\pp')^2=uv+w^2.$$
Let $\tilde{\calX} \ra \calX'$ denote the blow-up of the resulting singularities,
with central fiber the union of the proper transform of $\calX_0$ and the exceptional divisors
$$\tilde{\calX}_0=S \cup E_1 \cup \cdots \cup E_r, \quad \overline{E_j} \simeq \bP^1 \times \bP^1.$$
(At the cost of passing to an algebraic space, we could blow down $E_1,\ldots,E_r$ along one of the rulings of $\bP^1 \times \bP^1$.)  
Note that $S$ is a K3 surface and the specialization
$$\Pic(X_{\overline{k}}) \ra \Pic(S)$$
is injective with cokernel having torsion annihilated by $p$ \cite[Prop.~3.6]{MP}.  
However, this admits a factorization
$$\Pic(X_{\overline{k}}) \ra \Pic(\calX_0) \ra \Pic(S)$$
where the second arrow is injective.  Thus the cokernel of the first arrow
has torsion annihilated by $p$.  

We now focus on the case $\ell=2$.  We continue to use $\beta:S\ra \calX_0$
to denote the minimal resolution of $\calX_0$;  let $F_1,\ldots,F_r$ denote
the exceptional divisors of $\beta$, which satisfy
$$F_1^2=\cdots=F_r^2=-2, \quad F_iF_j=0, i\neq j,$$
because $\calX_0$ has ordinary double points.  Note that $S$ is still a K3
surface.  

There may exist \'etale double covers of $U$ that fail to extend to
\'etale covers of $\calX_0$.  Given an \'etale double cover $V\ra U$,
let $\varpi:T\ra S$ denote the normalization of $S$ in the function field of
$V$.  Since $T$ is normal, $\varpi$ is a flat morphism \cite[Ex.~18.17]{Eis},
\'etale away from $F_1\cup \cdots \cup F_r$.  Moreover, by purity
of the branch locus, $\varpi$ is
branched over some subset 
$$\{F_{j_1},\ldots,F_{j_s} \} \subset \{ F_1,\ldots,F_r\}.$$
Since the characteristic is odd, $\varpi$ is simply
branched over these curves.  Consequently, 
$\sum_{i=1}^s F_{j_i}=2D$ for some $D\in \Pic(S)$, 
hence $s\equiv 0\pmod{4}$, i.e., $s=0$ or $4$.  The case 
$s=0$ is impossible, since this would mean that $S$ admits 
an \'etale cyclic cover with degree prime to the characteristic.  
The case $s=4$ is also impossible:  We have
$$\chi(\sO_S(-D))=1$$
but 
$$h^2(\sO_S(-D))=h^0(\sO_S(D))=
0,$$
as any effective divisor supported in the $F_j$ (like $2D$) is rigid.  
On the other hand, since a divisor and its negative cannot
both be effective, we find
$$h^0(\sO_S(-2D))=0 \text{ which implies }
h^0(\sO_S(-D))=0.$$
Therefore 
$h^1(\sO_S(-D))=-1$,
which is a contradiction.
\end{proof}

\begin{remark}
When $r = 8$, it is possible that a smooth resolution of $\calX_0$ is a K3 surface with a Nikulin involution, in which case the smooth locus $U \subset \calX_0$ has a connected unramified cyclic double cover~\cite{SvG}.
\end{remark}

%%%%%%%%%%%%%%%%%%%%%
\subsection{Places where $\calA$ can ramify}

\begin{lemma}
\label{lem:most invariants}
Let $X$ be a K3 surface over a number field $k$ as in Proposition~\ref{prop: brauer elt}. Let $v$ be a finite place of good reduction for $X$, and assume that $v$ is not $2$-adic.
Then $\calA$ does not ramify at $v$. Consequently, $\inv_v \calA(P) = 0$ for all $P \in X(k_v)$.
\end{lemma}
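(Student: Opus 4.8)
The plan is to prove unramifiedness at $v$ by exhibiting an extension of $\calA$ to the Brauer group of a smooth proper integral model of $X$ over the local ring at $v$; the vanishing of the local invariants then follows formally by evaluating this extended class at integral points. Since $v$ is a finite place of good reduction that is not $2$-adic, the ring of integers $\calO := \calO_v$ of the completion $k_v$ is a (complete) discrete valuation ring whose residue field $\bF$ has $\Char\bF \neq 2$; this is exactly the setting in which Theorem~\ref{thm: Stein} is available over $\Spec\calO$.

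First I would spread out the defining data over $\calO$: clearing denominators (all units at $v$), the quadratic forms $A,\dots,F$ have coefficients in $\calO$ and define a type $(2,2)$ divisor $\calW$ via~\eqref{eq: calW} together with its double cover $\calY$ via~\eqref{eq: calY}, both flat over $\Spec\calO$. The point to verify is that good reduction of $X$ at $v$ forces the branch sextic $\calC = \calC_1$ of the induced quadric bundle $q_1$ to have smooth closed fiber $(\calC_1)_\bF$: in residue characteristic $\neq 2$ the K3 surface $X$ is the discriminant double cover of this sextic, and its reduction is again a smooth K3 surface precisely when the reduced sextic is smooth. Granting this, Theorem~\ref{thm: Stein} applies over $\calO$ and produces the Stein factorization $\calF \to \calX \xrightarrow{\phi} \bP^2$, in which $\calX \to \Spec\calO$ is a smooth proper K3 model with generic fiber $X$ and $r\colon \calF \to \calX$ is a smooth $\bP^1$-bundle. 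As recorded in the discussion following that theorem, $\calF \to \calX$ represents a two-torsion class in $\Br(\calX)$ whose restriction to the generic fiber is the class $\calA$ of~\eqref{eq:representatives}. Thus $\calA$ lies in the image of $\Br(\calX) \to \Br(X_{k_v})$ for a model $\calX$ smooth and proper over $\calO_v$, which is exactly the assertion that $\calA$ is unramified at $v$.

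For the consequence, let $P \in X(k_v)$. By the valuative criterion of properness applied to $\calX \to \Spec\calO_v$ (and separatedness for uniqueness), $P$ extends to a unique integral point $\tilde P \in \calX(\calO_v)$, so that $\ev_\calA(P) \in \Br(k_v)$ is the image of $\tilde P^{*}\calA \in \Br(\calO_v)$ under the restriction $\Br(\calO_v) \to \Br(k_v)$. Since $\calO_v$ is a Henselian discrete valuation ring with finite residue field, $\Br(\calO_v) \cong \Br(\bF) = 0$; hence $\calA(P) = 0$ and $\inv_v\calA(P) = 0$ for every $P \in X(k_v)$.

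The main obstacle is the comparison carried out in the second paragraph: reconciling the abstract good-reduction hypothesis on $X$ with the concrete integral model arising from the $(2,2)$ divisor, i.e. checking that good reduction of $X$ genuinely forces $(\calC_1)_\bF$ to be smooth so that the hypotheses of Theorem~\ref{thm: Stein} hold, and that the model $\calX$ it returns is the smooth proper model at which we evaluate. The non-$2$-adic assumption enters in precisely this step, keeping $\Char\bF \neq 2$; without it the determinantal description~\eqref{eq: K3} and the double-cover construction underlying Theorem~\ref{thm: Stein} both degenerate, which is why the $2$-adic places must be excluded here and handled separately.
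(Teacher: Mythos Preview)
Your proposal is correct and follows essentially the same route as the paper: spread out $A,\dots,F$ to $\calO_v$, use good reduction of $X$ (in residue characteristic $\neq 2$) to get smoothness of $(\calC_1)_\bF$, apply Theorem~\ref{thm: Stein} to obtain a smooth $\bP^1$-bundle $\calF\to\calX$ over a smooth proper model $\calX/\calO_v$, recognise its class as $\calA$, and then evaluate at integral points using $\Br(\calO_v)=0$. The paper's own argument is identical in structure; your write-up is in fact slightly more careful in isolating the one delicate step---that good reduction of $X$ forces the explicit determinantal model to have smooth closed fiber---which the paper asserts in a single clause.
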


\begin{proof}
We may assume without loss of generality that the coefficients of $A,\dots,F$ are integral. Let $\calO_v$ be the ring of integers of $k_v$, and $\bF_v$ its residue field. Since $X$ is smooth proper over $k$ and has good reduction at $v$, there is a smooth proper morphism $\calX \to \Spec\calO_v$ with $X_{k_v} = \calX\times_{\calO_v} k_v$. 
%In fact, $\calX$ is cut out by the same equation as $X$, considered in $\Proj \calO_v[x_0,x_1,x_2,w]$. 
We will show that the class $\calA\otimes k_v$ can be spread out to a class in $\Br(\calX)$. Since, by the valuative criterion of properness, we have $\calX(\calO_v) = X(k_v)$, it will follow that $\calA(P) \in \Br(\calO_v) = 0$ for every point $P \in X(k_v)$, establishing all the claims of the proposition.

Define $\calW$ and $\calY$ over $\calO_v$, respectively, by~\eqref{eq: calW} and~\eqref{eq: calY}. The quadric surface bundle $(q_1)_\bF\colon \calY_\bF \to \bP^2_\bF$ ramifies over the discriminant curve of $\calX_{\bF_v} \to \bP_{\bF_v}^2$, which is smooth, because $X$ has good reduction at $v$. By Theorem~\ref{thm: Stein}, there exists a smooth $\bP^1$ bundle $\calF \to \calX$, whose corresponding two-torsion class in $\Br(\calX)$ is represented by the quaternion algebra $(B^2 - 4AB,A)$, by the discussion following Theorem~\ref{thm: Stein}. Thus $\calA \in \Br(\calX)$, as claimed.
\end{proof}

%%%%%%%%%%%%%%%%%%%%%%%
\subsection{Real and $2$-adic invariants}

In this section we use the notation of Proposition~\ref{prop: brauer elt}, specializing to the case $k = \bQ$. The following lemma gives a sufficient condition to guarantee that the local invariants of $\calA$ at real points of $X$ are always non-trivial. 

\begin{lemma}
\label{lem:real place}
Suppose that the quadratic forms $A$, $B$, $C$, $D$, $E$ and $F$ satisfy
\begin{enumerate}
\item $A$, $D$ and $F$ are negative definite,
\item $B$, $C$ and $E$ are positive definite
\end{enumerate}
Then, for any real point of $X$, we have
\[
M_A > 0,\quad
M_D > 0\quad\text{and}\quad
M_F > 0.
\]
\end{lemma}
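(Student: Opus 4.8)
The plan is to combine the pointwise sign data from the definiteness hypotheses with the one extra constraint that a \emph{real} point of $X$ imposes, namely that $\det M \le 0$. First I would note that every point of $X$ has $(x_0,x_1,x_2)\neq(0,0,0)$: since $A,\dots,F$ are homogeneous of degree $2$, the matrix $M$ vanishes at the origin, so a point with $x_0=x_1=x_2=0$ would force $w^2=-\frac{1}{2}\det M=0$ and hence fail to be a point of $\bP(1,1,1,3)$. Thus at a real point $(x_0,x_1,x_2)$ is a genuine nonzero real argument, and the hypotheses give
\[
A<0,\quad D<0,\quad F<0,\qquad B>0,\quad C>0,\quad E>0.
\]
The essential point to flag at the outset is that these sign conditions \emph{alone} do not yield $M_F=4AD-B^2>0$ (take $A=D=-1$ with $B$ large), so the argument is forced to use the defining equation of $X$.

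Second, I would extract that equation: for a real point, $w^2=-\frac{1}{2}\det M\ge 0$ forces $\det M\le 0$. Expanding the $3\times 3$ determinant and grouping the two monomials divisible by $F$ produces the identity
\[
\det M = 2F\,(4AD-B^2) - 2\bigl(AE^2 - BCE + C^2D\bigr) = 2F\,M_F - 2\bigl(AE^2-BCE+C^2D\bigr).
\]
Under the sign conditions each of $AE^2$, $-BCE$, $C^2D$ is negative, so the parenthesised remainder is some $P<0$. Rearranging gives $2F\,M_F=\det M + 2P\le 0 + 2P<0$, hence $2F\,M_F<0$; since $F<0$, dividing yields $M_F>0$.

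Finally, the entire configuration is symmetric under the group $S_3$ permuting the diagonal entries $2A,2D,2F$ (and correspondingly the off-diagonal entries $B,C,E$): the hypotheses, the constraint $\det M\le 0$, and the triple $\{M_A,M_D,M_F\}$ are all preserved, with $M_A$ and $M_D$ playing the role of $M_F$ under the appropriate permutations. Hence the identical grouping argument---collecting instead the terms divisible by $A$, respectively by $D$---gives $M_A>0$ and $M_D>0$. I do not expect any serious obstacle beyond bookkeeping; the only genuinely substantive step, and the one I would take care to get right, is the recognition that the real-point constraint $\det M\le 0$ is exactly the missing ingredient and that the complementary remainder in each determinant identity is a sum of manifestly negative monomials. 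Everything else is a routine (if slightly tedious) determinant expansion.
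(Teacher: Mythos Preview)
Your proposal is correct and follows essentially the same approach as the paper: both arguments expand $\det M$ by grouping the terms divisible by one diagonal entry, observe that the remaining three monomials are each negative under the sign hypotheses, and then use the real-point constraint $\det M\le 0$ to force the corresponding $M_\bullet$ positive. The only cosmetic differences are that the paper treats $M_A$ first and phrases the final step as a contradiction, whereas you treat $M_F$ first and argue directly; your explicit remark that $(x_0,x_1,x_2)\neq 0$ and your invocation of the $S_3$ symmetry are nice touches that the paper leaves implicit.
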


\begin{proof}
First, observe that we can write $\frac{1}{2}\det(M)$ as
\begin{equation}
\label{eq:det M_A}
A\cdot M_A - (C^2D + B^2F - BCE).
\end{equation}
Let $P$ be a real point of $X$, so that $\frac{1}{2}\det(M) \leq 0$ holds at $P$. Our hypotheses on $A,\dots,F$ imply that
\[
(C^2D + B^2F - BCE)(P) < 0.
\]
Suppose first that $M_A \leq 0$. Then at $P$ we have
\[
\frac{1}{2}\det(M) = \underbrace{A}_{<0}\cdot \underbrace{M_A}_{\leq 0} - (\underbrace{C^2D + B^2F - BCE}_{< 0}) > 0,
\]
a contradiction. Hence $M_A > 0$ at $P$. A similar argument shows the remaining two cases.
\end{proof}

\begin{corollary}
\label{cor:real place}
Suppose the hypotheses of Lemma~\ref{lem:real place} hold. Then the local invariant of $\calA$ at every real point of $X$ is nontrivial.
\end{corollary}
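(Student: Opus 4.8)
The plan is to reduce the statement to the explicit formula for the Hasse invariant of a quaternion algebra over $\bR$, combined with the sign information supplied by Lemma~\ref{lem:real place}. Recall that for $a,b \in \bR^\times$ the local invariant $\inv_\infty(a,b) \in \frac{1}{2}\bZ/\bZ$ of the quaternion algebra $(a,b)$ is nontrivial (equal to $1/2$) precisely when both $a < 0$ and $b < 0$, and is trivial otherwise. So it suffices to exhibit a representative of $\calA$ both of whose entries are negative at every real point of $X$.

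I would use the representative $\calA = (B^2 - 4AD,\, A) = (-M_F,\, A)$ from Proposition~\ref{prop: brauer elt}. The first task is to check that this representative is well-defined at every real point. A real point $P \in X(\bR)$ maps to a point $\bar P = (x_0 : x_1 : x_2) \in \bP^2(\bR)$, and the entries $-M_F$ and $A$ are homogeneous in $x_0,x_1,x_2$ of even degrees $4$ and $2$ respectively; hence their signs, and their classes modulo squares in $\bR^\times$, are well-defined at $\bar P$ provided they do not vanish. Since $A$ is negative definite by hypothesis~(1) of Lemma~\ref{lem:real place}, we have $A(\bar P) < 0$ for every nonzero $\bar P$; and since $M_F(\bar P) > 0$ by Lemma~\ref{lem:real place}, we have $-M_F(\bar P) < 0$. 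In particular neither entry vanishes, so $(-M_F, A)$ is a valid representative of $\calA$ at $P$.

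It then remains only to read off the invariant: both entries $-M_F(\bar P)$ and $A(\bar P)$ are strictly negative, so the invariant formula gives $\inv_\infty \calA(P) = 1/2 \neq 0$, which is the assertion of the corollary. I do not expect a genuine obstacle here, since the substantive content has already been isolated in Lemma~\ref{lem:real place}: the single delicate point—that $P$ lying on $X$ forces $\frac{1}{2}\det(M)(\bar P) \le 0$, which is exactly the input that lemma uses to deduce $M_F(\bar P) > 0$—is absorbed into its statement. Thus the corollary follows directly from Lemma~\ref{lem:real place} together with the computation of the real Hasse invariant. (If $X(\bR) = \emptyset$ the claim is vacuous; otherwise the argument applies verbatim at each real point, and the definiteness hypotheses guarantee the representative $(-M_F,A)$ is simultaneously well-defined and negative-negative at all of them.)
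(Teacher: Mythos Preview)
Your proof is correct and follows essentially the same line as the paper: show that at every real point some representative of $\calA$ has both entries negative, then invoke the real Hilbert-symbol computation. The only minor difference is that the paper gestures at all six representatives in~\eqref{eq:representatives}, while you observe that the single representative $(-M_F,A)$ already works everywhere because the negative-definiteness of $A$ forces $A(\bar P)<0$ (hence nonzero) uniformly; this is a small sharpening of the paper's argument rather than a different approach.
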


\begin{proof}
It suffices to show that, for any real point $P$ of $X$, there is a quaternion algebra representing $\calA$ whose entries are both negative at $P$. Using the six representatives~\eqref{eq:representatives} of $\calA$, together with Lemma~\ref{lem:real place}, the result follows.
\end{proof}

Next, we write down a sufficient condition to guarantee that the local invariant map on $\calA$ is constant and trivial on $2$-adic points.  Write $v_2\colon\bQ_2 \to \bZ\cup \{\infty\}$ for the standard $2$-adic valuation.  Recall that $a \in \bQ_2^\times$ is a square if and only if $v_2(a)$ is even and if $a/2^{v_2(a)} \equiv 1 \bmod 8$.

Let $P = [x_0:x_1:x_2:w]$ denote a $2$-adic point of $X$. We may assume without loss of generality that $x_0$, $x_1$ or $x_2$ are elements of $\bZ_2$, at least one of which is a $2$-adic unit. Suppose first that $x_0$ is a $2$-adic unit, so that $v_2(x_0) = 0$. We use  the representative $(-M_F,A)$ of $\calA$ to evaluate invariants at $P$. Write
\[
A = A_1x_0^2 + A_2x_0x_1 + A_3x_0x_2 + A_4x_1^2 + A_5x_1x_2 + A_6x_2^2,
\]
and suppose that the coefficients of $A$ satisfy
\[
A_1 \equiv 1 \bmod 8, \quad\text{and}\quad
v_2(A_i) \geq 3 \text{ for } i = 2,\dots,6.
\]
Then, at $P$, we have $A \equiv 1 \bmod 8$ (since $v_2(x_0) = 0$) so $A$ is a $2$-adic square. It follows that $\inv_2\calA(P) = 0$, provided that $M_F(P) \neq 0$. To ensure this, we impose restrictions on the coefficients of the quadratic form
\[
B = B_1x_0^2 + B_2x_0x_1 + B_3x_0x_2 + B_4x_1^4 + B_5x_1x_2 + B_6x_2^2.
\]
Suppose that
\[
v_2(B_1) = 0,\quad\text{and}\quad
v_2(B_i) \geq 1 \text{ for } i = 2,\dots,6.
\]
Then, since $v_2(x_0) = 0$, it follows that 
\[
v_2(M_F(P)) = v_2(B(P)) = 0
\]
and hence $M_F\neq 0$ at $P$.

To ensure that $2$-adic invariants of $\calA$ are trivial at points where $v_2(x_1) = 0$, we use the representative $(-M_A,D)$ of $\calA$ and constrain the coefficients of $D$ and $E$, respectively, in a manner analogous to how we constrained the coefficients of $A$ and $B$. We proceed similarly for $2$-adic points with $v_2(x_2) = 0$.  We summarize our discussion in the following lemma.

\begin{lemma}
\label{lem:2-adic invariants}
Write
\begin{align*}
A &= A_1x_0^2 + A_2x_0x_1 + A_3x_0x_2 + A_4x_1^2 + A_5x_1x_2 + A_6x_2^2, \\
B &= B_1x_0^2 + B_2x_0x_1 + B_3x_0x_2 + B_4x_1^2 + B_5x_1x_2 + B_6x_2^2, \\
C &= C_1x_0^2 + C_2x_0x_1 + C_3x_0x_2 + C_4x_1^2 + C_5x_1x_2 + C_6x_2^2, \\
D &= D_1x_0^2 + D_2x_0x_1 + D_3x_0x_2 + D_4x_1^2 + D_5x_1x_2 + D_6x_2^2, \\
E &= E_1x_0^2 + E_2x_0x_1 + E_3x_0x_2 + E_4x_1^2 + E_5x_1x_2 + E_6x_2^2, \\
F &= F_1x_0^2 + F_2x_0x_1 + F_3x_0x_2 + F_4x_1^2 + F_5x_1x_2 + F_6x_2^2. 
\end{align*}
Suppose that the coefficients of these quadratic forms satisfy:
\begin{enumerate}
\item $A \equiv 1 \bmod 8$, and $v_2(A_i) \geq 3$ for $i \neq 1$.
\item $v_2(B_1) = 0$, and $v_2(B_i) \geq 1 \text{ for } i \neq 1$.
\item $v_2(C_6) = 0$, and $v_2(C_i) \geq 1 \text{ for } i \neq 6$.
\item $D_4 \equiv 1 \bmod 8$, and $v_2(D_i) \geq 3$ for $i \neq 4$.
\item $v_2(E_4) = 0$, and $v_2(E_i) \geq 1 \text{ for } i \neq 4$.
\item $F_6 \equiv 1 \bmod 8$, and $v_2(F_i) \geq 3$ for $i \neq 6$.
\end{enumerate}
Then, for every $2$-adic point $P$ of $X$, we have $\inv_2 \calA(P) = 0$.
\qed
\end{lemma}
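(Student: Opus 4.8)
The plan is to normalize $P$, split into three cases according to which of $x_0,x_1,x_2$ is a $2$-adic unit, and in each case select the representative of $\calA$ from the list~\eqref{eq:representatives} whose second entry is forced to be a $2$-adic square by the relevant ``diagonal'' hypothesis, while its first entry is forced to be a $2$-adic unit (hence nonzero) by the relevant ``off-diagonal'' hypothesis. The case $v_2(x_0)=0$ is exactly the computation carried out in the paragraphs immediately preceding the lemma; the other two cases follow from it by the manifest symmetry built into hypotheses (1)--(6).

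First I would scale the weighted-projective coordinates so that $x_0,x_1,x_2\in\bZ_2$ with at least one a $2$-adic unit; the $w$-coordinate plays no role, since $A,\dots,F$ and the discriminants $M_A,M_D,M_F$ depend only on $x_0,x_1,x_2$. Because the entries of each representative in~\eqref{eq:representatives} are homogeneous of even degree (the forms $A,D,F$ have degree $2$ and the $M_\bullet$ degree $4$), passing to another affine representative of $P$ multiplies each entry by an even power, i.e.\ by a square, so the value of the associated Hilbert symbol at $P$ is well-defined. The three cases $v_2(x_0)=0$, $v_2(x_0)>0=v_2(x_1)$, and $v_2(x_0),v_2(x_1)>0=v_2(x_2)$ are then exhaustive.

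In the case $v_2(x_0)=0$ I use $(-M_F,A)$. Hypothesis (1) gives $A(P)\equiv A_1x_0^2\equiv 1\bmod 8$: indeed $A_1\equiv 1\bmod 8$, the unit $x_0$ satisfies $x_0^2\equiv 1\bmod 8$, and every remaining monomial of $A$ has coefficient of valuation $\ge 3$; thus $A(P)$ is a $2$-adic square. Hypothesis (2) gives $v_2(B(P))=0$, and since $M_F=4AD-B^2$ with $v_2(4A(P)D(P))\ge 2$, this forces $v_2(M_F(P))=0$, so $M_F(P)\ne 0$. Hence the symbol $\big(-M_F(P),A(P)\big)$ has nonzero first entry and square second entry, so it is trivial and $\inv_2\calA(P)=0$. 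The cases $v_2(x_1)=0$ and $v_2(x_2)=0$ run identically with $(-M_F,A)$ replaced by $(-M_A,D)$ and $(-M_D,F)$: hypotheses (4) and (6) make $D(P)$ and $F(P)$ squares (through the $x_1^2$- and $x_2^2$-coefficients $D_4,F_6$), while hypotheses (5) and (3) make $E(P)$ and $C(P)$ units, forcing $M_A(P)$ and $M_D(P)$ to be units.

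I expect no serious difficulty in the $2$-adic estimates themselves; the one point deserving care is the combinatorial bookkeeping. One must check that the three chosen representatives $(-M_F,A)$, $(-M_A,D)$, $(-M_D,F)$ between them handle every possible unit coordinate, and that hypotheses (1)--(6) are distributed so that, for each unit coordinate, the diagonal form in the chosen representative is $\equiv 1\bmod 8$ (a square) while the off-diagonal form controlling the associated discriminant $M_\bullet$ is a unit (making the first entry nonzero). This pairing is precisely what makes both conditions hold simultaneously in each case, and it is the crux of the argument.
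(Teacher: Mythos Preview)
Your proposal is correct and follows essentially the same approach as the paper: the discussion immediately preceding the lemma is the paper's proof, and it normalizes $P$, treats the case $v_2(x_0)=0$ with the representative $(-M_F,A)$ exactly as you do, and then invokes the evident symmetry to handle $v_2(x_1)=0$ via $(-M_A,D)$ and $v_2(x_2)=0$ via $(-M_D,F)$. Your added remarks on well-definedness under rescaling and the exhaustive case split are welcome but do not depart from the paper's argument.
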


%****************************************************************************
\section{An example}
\label{s: an example}

Let $W \subset \Proj\bQ[x_0,x_1,x_2] \times \Proj\bQ[y_0,y_1,y_2]$ be the type $(2,2)$ divisor given by the vanishing of the bihomogeneous polynomial
\begin{equation}
\label{eq:2-2 divisor}
\begin{split}
-7x_0^2y_0^2 &+ 3x_0^2y_0y_1 + 10x_0^2y_0y_2 - 16x_0^2y_1^2 + 4x_0^2y_1y_2 - 40x_0^2y_2^2 - 16x_0x_1y_0^2 \\ 
			 &+ 4x_0x_1y_0y_2 + 8x_0x_1y_1^2 + 32x_0x_1y_2^2 + 16x_0x_2y_0^2 + 2x_0x_2y_0y_1 + 4x_0x_2y_0y_2 \\
			 &- 4x_0x_2y_1y_2 - 24x_1^2y_0^2 + 2x_1^2y_0y_1 + 4x_1^2y_0y_2 - 23x_1^2y_1^2 + 11x_1^2y_1y_2 \\
			 &- 40x_1^2y_2^2 + 8x_1x_2y_0^2 - 4x_1x_2y_0y_1 - 2x_1x_2y_0y_2 + 8x_1x_2y_1^2 - 4x_1x_2y_1y_2 \\
			 &- 8x_1x_2y_2^2 - 16x_2^2y_0^2 + 4x_2^2y_0y_1 + x_2^2y_0y_2 - 40x_2^2y_1^2 + 6x_2^2y_1y_2 - 23x_2^2y_2^2.
\end{split}
\end{equation}
As in~\S\ref{s: unramified}, the projections $\pi_i\colon W \to \bP^2$ give conic bundle structures on $W$ ramified over plane sextics $C_i$, $i = 1, 2$.  Consider the quadrics
\begin{equation}
\label{eq:quadrics}
\begin{split}
A &:= -7x_0^2 -16x_0x_1 + 16x_0x_2 -24x_1^2 + 8x_1x_2 -16x_2^2 \\
B &:= 3x_0^2 + 2x_0x_2 + 2x_1^2 - 4x_1x_2 + 4x_2^2 \\
C &:= 10x_0^2 + 4x_0x_1 + 4x_0x_2 + 4x_1^2 - 2x_1x_2 + x_2^2 \\
D &:= -16x_0^2 + 8x_0x_1 - 23x_1^2 + 8x_1x_2 - 40x_2^2 \\
E &:= 4x_0^2 - 4x_0x_2 + 11x_1^2 - 4x_1x_2 + 6x_2^2 \\
F &:= -40x_0^2 + 32x_0x_1 - 40x_1^2 - 8x_1x_2 - 23x_2^2.
\end{split}
\end{equation}
An equation for $C_1$ is then given by $-\frac{1}{2}\det(M) = 0$, with $M$ as in~\eqref{eq: K3}. An equation for $C_2$ can be found analogously.  The Jacobian criterion shows that both $C_1$ and $C_2$ are smooth; thus, for $i = 1, 2$, the double cover $X_i \to \bP^2$ ramified along $C_i$ is a K3 surface of degree $2$.

%****************************************************************************
\subsection{Primes of bad reduction}
\label{ss: bad red}
%To determine the local invariants of the algebra $\calA$ of $X_1$ given by~\eqref{eq:quaternion algebra}, we need to know the places of bad reduction of $X_1$.
A Groebner basis calculation over $\bZ$ shows that the primes of bad reduction of $X_1$ divide
\[
\begin{split}
m := \ & 1115508232640214856843363784231663793779083264535962688555888430968 \\
       & 8933364438401787008291918987282105867611490800785997644322303281186 \\
       & 8922614222749465991103128446037422257623280138072129654879995620391 \\
       & 0907629715637695773281604080143775185215794393627484442538367517916 \\
       & 8651952191024387026109016400178074232186309443422761817391984342483 \\
       &34511814400.
\end{split}
\]
Standard factorization methods quickly reveal a few small prime power factors of $m$:
\[
m = 2^8\cdot 5^2\cdot 7\cdot 89\cdot 173\cdot 257^2\cdot 263\cdot 650779^2\cdot m'.
\]
The remaining factor $m'$ has 318 decimal digits.  Factoring $m'$ with present day mathematical and computational technology is a difficult problem.  However, the presence of the second K3 surface $X_2$ supplies a backdoor solution: by Lemma~\ref{lem:W smooth}, a prime of bad reduction for $W$ is a prime of bad reduction for \emph{both} $X_1$ and $X_2$.  

Another Groebner basis calculation shows that the primes of bad reduction of $X_2$ divide
\[
\begin{split}
n := \ & 18468445386704774116897512713438756322646374324269134481315634355660 \\
       & 59216198653927410468599212130905398491499555534045930594495263034981 \\
       & 50100881353352665095649631677613412079293044973446406764509694053112 \\
       & 10471631439070548340358668493117334582314574674926223315439909955021 \\
       & 6973495867514854209929544319382116616140800
\end{split}
\]
Again, standard factorization methods give a few small prime power factors of $n$:
\[
n = 2^{11}\cdot 5^2\cdot 7\cdot 89\cdot 173\cdot 263\cdot 461^2\cdot 6547^2\cdot n',
\]
where $n'$ has 290 decimal digits.  Our observation says that we may reasonably expect that $m'$ and $n'$ have a large greatest common divisor (which is easily calculated using the Euclidean algorithm).   This is indeed the case:
\[
\begin{split}
\gcd(m',n') := \ & 809147864157687938441948148614369785987783654943839689121548451 \\
                 & 788111145202992792430023470932052297439515068068797124401938255 \\
                 & 799311490342451172887433057574480263654457987109316488649107.
\end{split}
\]
Here a small miracle happens: $\gcd(m',n')$ \emph{is a prime number!} This claim is rigorously verified using elliptic curve primality proving algorithms~\cite{AM}, implemented in both SAGE and {\tt magma}. We are now in a position to complete the factorization of $m$, and hence compute the primes of bad reduction for $X_1$, which are:
\[
\begin{split}
&2, 5, 7, 89, 173, 257, 263, 650779, \\
&521219738678096220868573969913582546660848099260319499224599922739, \\
&\gcd(m',n').
\end{split}
\]

\begin{remark}
Our numerical experiments yield several ``viable'' pairs $(X_1,\calA)$ that could be counter-examples to the Hasse principle explained by a transcendental Brauer-Manin obstruction arising from $\calA$, in the following sense: $X_1$ has geometric Picard rank $1$, and we can control the real and $2$-adic invariants of $\calA$ (using Corollary~\ref{cor:real place} and Lemma~\ref{lem:2-adic invariants}).  Out of a dozen or so viable candidates that our initial search yielded, the example we present is the only one we found for which $\gcd(m',n')$ is a prime number.  One can obtain further examples by computing $2$-adic invariants by ``brute force'' instead of using Lemma~\ref{lem:2-adic invariants}.
\end{remark}

%****************************************************************************
\subsection{Local points}
\label{ss: Local points}

By the Weil Conjectures, if $p > 22$ is a prime such that $X_1$ has smooth reduction $(X_1)_p$ at $p$, then $(X_1)_p$ has a smooth $\bF_p$-point, which can be lifted by Hensel's lemma to a smooth $\bQ_p$-point.  Thus, to show $X_1$ is locally soluble, it suffices to verify that $X_1$ has local points at $\bR$ (clear), and at $\bQ_p$ for primes $p \leq 19$ and primes $p > 19$ where $X_1$ has bad reduction.  This is indeed the case: we substitute integers  with small absolute value for $x_0$, $x_1$, and $x_2$, and check if $-\frac{1}{2}\det(M)$ is a square in $\bQ_p$.  The results are recorded in Table~\ref{ta:local points}.

\begin{table}
$$
\begin{tabular}{c|r|r|r|r}
$p$ & $x_0$ & $x_1$ & $x_2$ & $-\frac{1}{2}\det(M)$ \\
\hline
$2$ & $0$ & ${0}$ & ${-1}$ & $57872$ \\
\hline
$3$ & ${-1}$ & ${-1}$ & ${1}$ & $1622952$ \\
\hline
$5$ & ${-1}$ & ${-1}$ & ${-1}$ & $736256$ \\
\hline
$7$ & ${-1}$ & ${-1}$ & $0$ & $256575$ \\
\hline
$11$ & ${-1}$ & ${-1}$ & ${-1}$ & $736256$ \\
\hline
$13$ & ${-1}$ & ${-1}$ & ${-1}$ & $736256$ \\
\hline
$17$ & ${-1}$ & ${-1}$ & $1$ & $1622952$ \\
\hline
$19$ & ${-1}$ & ${-1}$ & ${-1}$ & $736256$ \\
\hline
$89$ & ${-1}$ & ${0}$ & ${-1}$ & $80019$ \\
\hline
$173$ & ${-1}$ & ${-1}$ & $0$ & $256575$ \\
\hline
$257$ & ${-1}$ & ${-1}$ & ${-1}$ & $736256$ \\
\hline
$263$ & ${-1}$ & ${-1}$ & $0$ & $256575$ \\
\hline
$650779$ & ${-1}$ & ${-1}$ & $1$ & $1622952$ \\
\hline
\multirow{3}{*}{}$5212197386780962208687$ & & & & \\
				$3969913582546660848099$  & ${-1}$ & ${-1}$ & ${-1}$ & $736256$ \\
				$260319499224599922739$  & & & & \\
\hline
$\gcd(m',n')$ & ${-1}$ & ${-1}$ & ${-1}$ & $736256$ \\
\end{tabular}
$$
\caption{Verifying $X_1$ has $\bQ_p$-points at small $p$ and primes of bad reduction.}
\label{ta:local points}
\end{table}

%****************************************************************************
\subsection{Picard Rank 1}

In this section we show $X_1$ has (geometric) Picard rank  1.  This will allow us to conclude that the obstruction to the Hasse principle arising from $\calA$ is genuinely transcendental.  Until recently, the method to prove a K3 surface has odd Picard rank, devised by van Luijk and refined by Kloosterman, and Elsenhans and Jahnel~\cite{vanLuijk,Kloosterman,ElsenhansJahnelRefinement}, required point counting over extensions of the residue field at two primes of good reduction.  A recent result of Elsenhans and Jahnel allows us to prove odd Picard rank using information at two primes, but counting points over extensions of a single residue field.

\begin{theorem}[{\cite{ElsenhansJahnelOnePrime}}]
\label{thm:torsion-free cokernel}
Let $f\colon X \to \Spec \bZ$ be a proper, flat morphism of schemes. Suppose there is a rational prime $p \neq 2$ such that the fiber $X_p$ of $f$ at $p$ satisfies $H^1(X_p,\sO_{X_p}) = 0$. Then the specialization homomorphism $\Pic(X_{\overline{\bQ}}) \to \Pic(X_{\overline{\bF}_p})$ has torsion-free cokernel.
\end{theorem}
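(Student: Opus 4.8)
The plan is to pass to a complete discrete valuation ring and argue one prime at a time. Set $\calO = W(\overline{\bF}_p)$, a complete DVR with residue field $\overline{\bF}_p$ and fraction field of characteristic $0$, and put $\calX := X\times_{\bZ}\calO$, a proper flat $\calO$-scheme whose special fiber is $X_{\overline{\bF}_p}$ and whose generic geometric fiber has Picard group canonically isomorphic to $\Pic(X_{\overline{\bQ}})$ (N\'eron--Severi groups are insensitive to extensions of algebraically closed fields). Upper semicontinuity of $h^1(\sO)$ propagates the hypothesis $H^1(X_p,\sO_{X_p}) = 0$ to both geometric fibers, so each $\Pic^0$ vanishes and both Picard groups are finitely generated. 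The specialization homomorphism $\mathrm{sp}$ sends a class on the generic fiber---defined over a finite extension and extended over the corresponding cover of $\calO$ as the closure of a divisor---to its restriction on $X_{\overline{\bF}_p}$, so $\mathrm{im}(\mathrm{sp})$ is exactly the set of classes on $X_{\overline{\bF}_p}$ that lift to $\calX$. Since the cokernel of $\mathrm{sp}$ is torsion-free if and only if $\mathrm{im}(\mathrm{sp})$ is saturated, it suffices to prove, for each prime $\ell$, that $\ell L \in \mathrm{im}(\mathrm{sp})$ forces $L \in \mathrm{im}(\mathrm{sp})$.

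For every $\ell \neq p$ this is pure deformation theory, and it is here that the hypothesis $H^1(X_p,\sO_{X_p}) = 0$ is indispensable. A class lifts to $\calX$ if and only if it lifts compatibly to every infinitesimal thickening $\calX_m := \calX\times_{\calO}\calO/\mm^{m+1}$, after which Grothendieck's existence theorem algebraizes the resulting formal line bundle. The obstruction to lifting a line bundle from $\calX_m$ to $\calX_{m+1}$ is the connecting map of the truncated exponential sequence $0\to\sO_{X_{\overline{\bF}_p}}\otimes(\mm^{m+1}/\mm^{m+2})\to\sO_{\calX_{m+1}}^{*}\to\sO_{\calX_m}^{*}\to 0$; it therefore lands in $H^2\big(X_{\overline{\bF}_p},\sO\big)\otimes(\mm^{m+1}/\mm^{m+2})$ and is a group homomorphism in the class. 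Because $H^1\big(X_{\overline{\bF}_p},\sO\big)=0$, the lift of a class to each $\calX_m$ is unique, so the lift of $\ell L$ is $\ell$ times the lift of $L$ and the obstruction attached to $\ell L$ is $\ell$ times that attached to $L$. If $\ell L$ lifts, all of its obstructions vanish; as $\ell$ is invertible on the characteristic-$p$ vector space $H^2\big(X_{\overline{\bF}_p},\sO\big)$, the obstructions for $L$ vanish as well, $L$ lifts, and $L\in\mathrm{im}(\mathrm{sp})$. This argument uses neither smoothness of $\calX$ over $\calO$ nor torsion-freeness of any cohomology, so it survives the presence of singular fibers.

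The main obstacle is the remaining prime $\ell = p$, where this reasoning collapses: multiplication by $p$ annihilates the characteristic-$p$ obstruction group automatically, so vanishing of the obstructions for $pL$ says nothing about $L$, and $p$-saturation of $\mathrm{im}(\mathrm{sp})$ is a genuine arithmetic constraint. To establish it I would replace the infinitesimal input by a crystalline one: compare the crystalline cycle class map $\Pic\big(X_{\overline{\bF}_p}\big)\otimes\bZ_p\hookrightarrow H^2_{\mathrm{crys}}\big(X_{\overline{\bF}_p}/W\big)$ with the de Rham cycle classes on the generic fiber through the crystalline--de Rham comparison, and exploit that $H^2_{\mathrm{crys}}$ is torsion-free and contains the image of the cycle class map as a primitive sublattice; chasing $pL = \mathrm{sp}(M)$ through the comparison then forces $M$ to be $p$-divisible in $\Pic(X_{\overline{\bQ}})$, giving $L\in\mathrm{im}(\mathrm{sp})$. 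This comparison is cleanest when $X_{\overline{\bF}_p}$ is smooth, and the restriction $p\neq 2$ is most naturally accounted for here, where torsion-freeness of the relevant $p$-adic cohomology is at issue; the case of a singular special fiber requires the appropriate variant (log-crystalline or rigid cohomology) and is where the real work lies. Assembling the saturation statement over all primes $\ell$ then yields the torsion-free cokernel.
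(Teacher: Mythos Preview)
The paper does not prove this theorem at all; it is quoted as a black box from Elsenhans--Jahnel, so there is no ``paper's own proof'' to compare against. That said, your write-up is not a proof either, and you essentially say so yourself.

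Your treatment of primes $\ell\neq p$ is fine and is the standard argument: with $H^1(\sO)=0$ on the special fiber, lifts through thickenings are unique, the obstruction map $\Pic(\calX_m)\to H^2(\sO)\otimes(\mm^{m+1}/\mm^{m+2})$ is additive, and invertibility of $\ell$ on an $\bF_p$-vector space forces $\mathrm{ob}(L)=0$ once $\mathrm{ob}(\ell L)=0$. Grothendieck existence then algebraizes. This is exactly the mechanism Elsenhans--Jahnel use for $\ell\neq p$.

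The gap is the case $\ell=p$, and it is the entire content of the theorem; everything else is soft. You write ``To establish it I would\ldots'' and then sketch a crystalline/de Rham comparison, but you neither state the precise comparison theorem you need, nor verify the inputs (torsion-freeness of $H^2_{\mathrm{crys}}$, primitivity of the cycle class image, compatibility of cycle classes with specialization), nor explain how $p\neq 2$ enters. Your closing line ``this is where the real work lies'' is accurate: what remains is exactly the theorem. In the Elsenhans--Jahnel paper the $p$-saturation step is not an afterthought but the main result, established via the Berthelot--Ogus crystalline/de Rham comparison together with Deligne's criterion that a line bundle on the special fiber lifts if and only if its crystalline first Chern class lies in $\mathrm{Fil}^1 H^2_{\mathrm{dR}}$; one then checks that this filtration condition is stable under division by $p$ inside the torsion-free $W$-module $H^2_{\mathrm{crys}}$. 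Smoothness of $X_p$ is used here, and the role of $p\neq 2$ is tied to integrality properties of the Chern class map in that setting. If you want a self-contained argument, you need to import these statements explicitly and check their hypotheses; the outline you give does not do that.
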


We deduce the following generalization of~\cite[Example~1.6]{ElsenhansJahnelOnePrime}.

\begin{proposition}
\label{prop:check rho = 1}
Let $X$ be a K3 surface of degree $2$ over $\bQ$, given as a double cover $\pi\colon X \to \bP^2$ ramified over a smooth plane sextic curve $C$. Let $p$ and $p'$ denote two odd primes of good reduction for $X$. Assume that there exists a line $\ell$ that is tritangent to the curve $C_p$, and suppose further that $\Pic(\overline{X}_p)$ has rank $2$ and is generated by the curves in $\pi_p^{-1}(\ell)$. If there are no tritangent lines to the curve $C_{p'}$, then $\Pic(\overline{X})$ has rank $1$.
\end{proposition}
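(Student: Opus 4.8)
The plan is to run a specialization argument \`a la van Luijk, using the single-prime refinement afforded by Theorem~\ref{thm:torsion-free cokernel}. Fix a proper flat model $\calX \to \Spec\bZ$ with $\calX \times_\bZ \bQ = X$; for a prime $q \in \{p,p'\}$ of good reduction the geometric fiber $\calX_{\overline\bF_q} = \overline X_q$ is a K3 surface, and there is a specialization homomorphism $\mathrm{sp}_q\colon \Pic(\overline X) \to \Pic(\overline X_q)$ that is injective and respects both the intersection pairing and the polarization class $h = \pi^*\calO_{\bP^2}(1)$, with $h^2 = 2$. First I would record the numerical data attached to a tritangent line: the two curves $L_1,L_2$ in $\pi_p^{-1}(\ell)$ are smooth rational, so $L_i^2 = -2$, and since their classes sum to $h_p$ the relation $h_p^2 = 2$ forces $L_1\cdot L_2 = 3$ and $L_i\cdot h_p = 1$. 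In particular $\{L_1,L_2\}$ spans a rank-$2$ sublattice, consistent with the hypothesis that these classes generate $\Pic(\overline X_p)$.

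Because $\mathrm{sp}_p$ is injective, $\Pic(\overline X)$ has rank at most $2$, and since it always contains $h$ its rank is $1$ or $2$. Suppose for contradiction that the rank equals $2$. As $\overline X_p$ is a K3 surface, $H^1(\overline X_p,\calO) = 0$, so Theorem~\ref{thm:torsion-free cokernel} applies and the cokernel of $\mathrm{sp}_p$ is torsion-free. An injection of finitely generated free abelian groups of equal rank whose cokernel is torsion-free is an isomorphism (the cokernel has rank $0$ and no torsion, hence is trivial), so $\mathrm{sp}_p$ is an isomorphism. Consequently there is a class $D := \mathrm{sp}_p^{-1}(L_1) \in \Pic(\overline X)$ with $D^2 = -2$ and $D\cdot h = 1$, and $h - D = \mathrm{sp}_p^{-1}(L_2)$.

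Next I would transport this class to the second prime: set $\overline D := \mathrm{sp}_{p'}(D) \in \Pic(\overline X_{p'})$. Since $\mathrm{sp}_{p'}$ preserves the pairing and sends $h\mapsto h_{p'}$, we get $\overline D{}^2 = -2$, $\overline D\cdot h_{p'} = 1$, and $(h_{p'} - \overline D)^2 = -2$. On the K3 surface $\overline X_{p'}$, Riemann--Roch gives $\chi(\overline D) = 2 + \tfrac12\overline D{}^2 = 1 > 0$; as $\overline D\cdot h_{p'} = 1 > 0$ with $h_{p'}$ ample, $-\overline D$ cannot be effective, so $\overline D$ is effective, and the same reasoning shows $h_{p'} - \overline D$ is effective. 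Choosing an effective representative $\Gamma$ of $\overline D$, its image $\ell' := \pi_{p'}(\Gamma)$ is a line (degree $\Gamma\cdot h_{p'} = 1$), and the full preimage $\pi_{p'}^{-1}(\ell')$, an effective divisor of class $h_{p'}$, contains $\Gamma$ with nonzero residual class $h_{p'} - \overline D$. Thus $\pi_{p'}^{-1}(\ell')$ is reducible; since $p'\neq 2$, the branched double cover $w^2 = f|_{\ell'}$ is reducible only when $f|_{\ell'}$ is a perfect square, i.e.\ when $\ell'$ meets $C_{p'}$ with even multiplicity at every point. Hence $\ell'$ is tritangent to $C_{p'}$, contradicting the hypothesis, and therefore $\Pic(\overline X)$ has rank $1$.

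The main obstacle, and the step I would handle most carefully, is the passage from a numerical class back to honest geometry at $p'$: specialization transports only intersection numbers, so I must re-derive effectivity of $\overline D$ and of $h_{p'} - \overline D$ \emph{directly} on $\overline X_{p'}$ from Riemann--Roch and ampleness, and then argue rigorously that reducibility of $\pi_{p'}^{-1}(\ell')$ forces a genuine tritangent line (including the degenerate tangency configurations that the phrase ``no tritangent lines to $C_{p'}$'' must be taken to exclude). The short lattice-theoretic step---deducing from the torsion-free cokernel of Theorem~\ref{thm:torsion-free cokernel} together with the rank equality that $\mathrm{sp}_p$ is an isomorphism---is where the recent single-prime input does its work, and it is worth isolating cleanly.
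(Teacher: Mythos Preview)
Your proof is correct and follows essentially the same strategy as the paper's: use Theorem~\ref{thm:torsion-free cokernel} to conclude that if $\rk\Pic(\overline X)=2$ then $\mathrm{sp}_p$ is an isomorphism, lift a component of $\pi_p^{-1}(\ell)$ to characteristic~$0$, and specialize to $p'$ to produce a tritangent line there. The paper phrases the middle step geometrically (``the tritangent line must lift to characteristic~$0$ and cannot break modulo $p'$ by degree considerations''), whereas you unpack it more carefully via divisor classes and Riemann--Roch on $\overline X_{p'}$, but the argument is the same.
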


\begin{proof}
Since $\Pic(\overline{X})$ injects into $\Pic(\overline{X}_p)$, if $\Pic(\overline{X})$ has rank $2$, then the tritangent line $\ell$ must lift to a tritangent line $L$ in characteristic $0$, by Theorem~\ref{thm:torsion-free cokernel}. Degree considerations show that $L$ cannot break upon reduction modulo $p'$. This contradicts the assumption that the curve $C_{p'}$ has no tritangent lines.
\end{proof}

\begin{remarks}
\label{rems:computational dividends}
Proposition~\ref{prop:check rho = 1} is computationally useful because:
\begin{enumerate}
\item Checking the existence of a tritangent line modulo $p'$ is an easy Groebner basis calculation; see~\cite[Algorithm~8]{ElsenhansJahnelrankone}.
\item Given a K3 surface of degree $2$ over $\bQ$, we can quickly \emph{search} for small primes $p$ of good reduction over which the branch curve $C_{p'}$ of the double cover $X_{p'}\to \bP^2_{\bF_{p'}}$ has a tritangent line.
\end{enumerate}
\end{remarks}

%2x_1^2(x_0^2 + 2x_0x_1 + 2x_1^2)^2 + 2(x_0 + 2x_2)(x_0^5 + x_0^4x_1 + x_0^3x_1x_2 + x_0^2x_1^3 + x_0^2x_1^2x_2
% + 2x_0^2x_2^3 + x_0x_1^4 + 2x_0x_1^3x_2 + x_0x_1^2x_2^2 + x_1^5 + 2x_1^4x_2 + 2x_1^3x_2^2 + 2x_2^5)

Our particular surface $X_1$ reduces modulo $3$ to the (smooth) K3 surface
\[
\begin{split}
w^2 &= 2x_1^2(x_0^2 + 2x_0x_1 + 2x_1^2)^2 + (2x_0 + x_2)(x_0^5 + x_0^4x_1 + x_0^3x_1x_2 + x_0^2x_1^3 + x_0^2x_1^2x_2 \\
\quad &+ + 2x_0^2x_2^3 + x_0x_1^4 + 2x_0x_1^3x_2 + x_0x_1^2x_2^2 + x_1^5 + 2x_1^4x_2 + 2x_1^3x_2^2 + 2x_2^5)
\end{split}
\]
From the expression on the right hand side, it is clear that $2x_0 + x_2 = 0$ is a tritangent line to the branch curve of the double cover. The components of the pullback of this line generate a rank $2$ sublattice of $\Pic\big( (\overline{X_1})_3\big)$
Let $N_n := \#X_1(\bF_{3^n})$; counting points we find
$$
\begin{array}{r|r|r|r|r|r|r|r|r|r}
N_1 & N_2 & N_3 & N_4 & N_5 & N_6 & N_7 & N_8 & N_9 & N_{10}  \cr
\hline
7 & 79 & 703 & 6607 & 60427 & 532711 & 4792690 & 43068511 & 387466417 & 3486842479 \cr
\end{array}
$$
This is enough information to determine the characteristic polynomial $f$ of Frobenius on $H^2((X_1)_{\overline{\bF}_3},\overline{\bQ}_\ell)$; see, for example~\cite{vanLuijk} (the sign of the functional equation for $f$ is negative---a positive sign gives rise to roots of $f$ of absolute value $\neq 3$).
 Setting $f_3(t) = 3^{-22}f(3t)$, we obtain a factorization into irreducible factors as follows:
\[
\begin{split}
f_3(t) = \frac{1}{3}(t - 1)(t + 1)(3t^{20} &+ 3t^{19} + 5t^{18} + 5t^{17} + 6t^{16} + 2t^{15} + 2t^{14} - 3t^{13} - 4t^{12} - 8t^{11}\\
										 &- 6t^{10} - 8t^9 - 4t^8 - 3t^7 + 2t^6 + 2t^5 + 6t^4 + 5t^3 + 5t^2 + 3t + 3)
\end{split}
\]
The number of roots of $f_3(t)$ that are roots of unity give an upper bound for $\Pic((X_1)_{\overline{\bF}_3})$.  The roots of the degree $20$ factor of $f_{3}(t)$ are not integral, so they are not roots of unity. We conclude that $\rk \Pic((X_1)_{\overline{\bF}_3}) = 2$.

A computation shows that $X_1$ has no line tritangent to the branch curve when we reduce modulo $p' = 11$ (see Remark~\ref{rems:computational dividends}(i)).  Note that the surface is not smooth at $p' = 5, 7$. Applying Proposition~\ref{prop:check rho = 1}, we  obtain:

\begin{proposition}
\label{prop: Picard Rank 1}
The surface $X_1$ has geometric Picard rank $1$.
\end{proposition}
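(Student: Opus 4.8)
The plan is to invoke Proposition~\ref{prop:check rho = 1} with the pair of odd primes $p = 3$ and $p' = 11$, both of good reduction for $X_1$ (the primes $5$ and $7$ must be excluded, as noted, since $X_1$ is singular there). All the numerical input has already been assembled in the preceding discussion; what remains is to check that the three hypotheses of that proposition hold for $X_1$, and then read off the conclusion.

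First I would verify the hypotheses at $p = 3$. Restricting the displayed equation for $(X_1)_3$ to the line $\ell\colon 2x_0 + x_2 = 0$ kills the term $(2x_0 + x_2)(\cdots)$ and leaves $w^2 = 2x_1^2(x_0^2 + 2x_0x_1 + 2x_1^2)^2$, a perfect square; hence the preimage $\pi_3^{-1}(\ell)$ splits as a sum $C_+ + C_-$ of two rational curves, exhibiting $\ell$ as tritangent to the branch sextic $C_3$. Since $C_+ + C_- = \pi_3^*\ell = H$ with $H^2 = 2$ and $C_\pm^2 = -2$, the Gram matrix of $\{C_+, C_-\}$ is $\left(\begin{smallmatrix} -2 & 3 \\ 3 & -2 \end{smallmatrix}\right)$, of discriminant $-5$. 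These two classes therefore span a rank-$2$ sublattice of $\Pic\big((\overline{X_1})_3\big)$ of squarefree discriminant, so they in fact \emph{generate} $\Pic\big((\overline{X_1})_3\big)$ as soon as its rank is known to equal $2$ (any rank-$2$ overlattice would have index whose square divides $5$, forcing index $1$).

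The rank is pinned down by the point counts $N_1,\dots,N_{10}$ over $\bF_{3^n}$: these determine the characteristic polynomial $f$ of Frobenius on $H^2\big((X_1)_{\overline{\bF}_3},\overline{\bQ}_\ell\big)$ (fixing the sign of the functional equation as indicated in the text), and the normalized polynomial $f_3(t) = 3^{-22}f(3t)$ factors as $\tfrac13(t-1)(t+1)g(t)$ with $g$ irreducible of degree $20$. By the standard bound relating geometric Picard rank to Frobenius eigenvalues (see~\cite{vanLuijk}), $\rk\Pic\big((\overline{X_1})_3\big)$ is at most the number of reciprocal roots of $f$ of the form $3\zeta$ with $\zeta$ a root of unity, equivalently the number of roots of $f_3$ that are roots of unity; since the roots of $g$ are non-integral they cannot be roots of unity, leaving only $t = \pm 1$, so $\rk \le 2$. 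Combined with the rank-$2$ lower bound from $C_+, C_-$ this gives $\rk\Pic\big((\overline{X_1})_3\big) = 2$, and by the previous paragraph this lattice is generated by $\pi_3^{-1}(\ell)$.

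Finally I would record the hypothesis at $p' = 11$: a Groebner basis computation (Remark~\ref{rems:computational dividends}(i)) shows that the branch sextic $C_{11}$ of $(X_1)_{11} \to \bP^2_{\bF_{11}}$ admits no tritangent line. With all three hypotheses of Proposition~\ref{prop:check rho = 1} verified, that proposition yields $\rk\Pic(\overline{X_1}) = 1$. The main work---and the only genuinely delicate step---is the point-counting that secures the upper bound $\rk \le 2$ at $p = 3$: one must compute enough of the $N_n$ to determine $f$ uniquely, select the correct sign in the functional equation (the other sign producing reciprocal roots off the circle of radius $3$), and confirm the degree-$20$ factor has no cyclotomic roots. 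The tritangency at $p = 3$ and its absence at $p' = 11$ are comparatively routine Groebner calculations.
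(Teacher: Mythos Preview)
Your proof is correct and follows essentially the same route as the paper: verify the tritangent at $p=3$, use the point counts to bound $\rk\Pic\big((\overline{X_1})_3\big)\le 2$, check the absence of tritangents at $p'=11$, and invoke Proposition~\ref{prop:check rho = 1}. Your discriminant-$(-5)$ computation showing that $\{C_+,C_-\}$ actually \emph{generates} $\Pic\big((\overline{X_1})_3\big)$ (rather than merely spanning a rank-$2$ sublattice) is a detail the paper states without justification, so your version is in fact slightly more complete on this point.
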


%****************************************************************************
\subsection{Local invariants}

In this section we compute the local invariants of the algebra $\calA$ for our particular surface $X_1$.  

\begin{proposition}
\label{prop: Invariants computed}
Let $p\leq \infty$ be a prime number. For any $P \in X_1(\bQ_p)$, we have
\[
\inv_p\big(\calA(P)\big) = 
\begin{cases}
0, & \textup{if } \bQ_p \neq \bR, \\
1/2, & \textup{if } \bQ_p = \bR.
\end{cases}
\]
\end{proposition}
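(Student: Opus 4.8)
The plan is to treat each completion $\bQ_p$ separately, according to the four classes of places isolated in~\S\ref{s: local invariants}: the real place, the $2$-adic place, the odd primes of good reduction, and the odd primes of bad reduction. For the real place I would invoke Corollary~\ref{cor:real place}, whose hypotheses are precisely the definiteness conditions of Lemma~\ref{lem:real place}. These I would verify directly for the six quadratic forms in~\eqref{eq:quadrics} via Sylvester's criterion: the symmetric matrices of $-A$, $-D$, $-F$ and of $B$, $C$, $E$ each have positive leading principal minors, so $A$, $D$, $F$ are negative definite and $B$, $C$, $E$ are positive definite. This yields $\inv_\infty\big(\calA(P)\big) = 1/2$ for every real point $P$. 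For the $2$-adic place I would check that the integer coefficients in~\eqref{eq:quadrics} satisfy the six congruence and valuation conditions (1)--(6) of Lemma~\ref{lem:2-adic invariants}; for instance $A_1 = -7 \equiv 1 \bmod 8$ with $v_2(A_i) \geq 3$ for $i \neq 1$, while $D_4 = -23 \equiv 1 \bmod 8$ and $F_6 = -23 \equiv 1 \bmod 8$, and similarly for the remaining forms. The lemma then gives $\inv_2\big(\calA(P)\big) = 0$ for all $2$-adic points.

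For an odd prime $p$ of good reduction, Lemma~\ref{lem:most invariants} shows $\calA$ is unramified at $p$, so $\inv_p\big(\calA(P)\big) = 0$ for all $P \in X_1(\bQ_p)$. The substantive case is that of the odd primes of bad reduction, namely $5, 7, 89, 173, 257, 263, 650779$ together with the two large primes found in~\S\ref{ss: bad red}. For each such $p$ I would proceed as follows. First, exhibit a regular proper model $\calX \to \Spec\bZ_p$ of $X_1$ --- the natural double cover defined by $w^2 = -\frac{1}{2}\det(M)$, whose regularity I would confirm by a Jacobian computation over $\bZ_p$ --- and verify that the singular locus of its geometric closed fiber consists of $r < 8$ ordinary double points; concretely these are the nodes of the reduction of the branch sextic $C_1$ modulo $p$, which one counts and certifies to be ordinary by an explicit Gr\"obner/Jacobian calculation. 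With this in hand, Lemma~\ref{lem:no cyclic covers} (applied with $\ell = 2$, legitimate since $p \neq 2$) shows the smooth locus of the closed fiber has no connected unramified cyclic double cover, so the hypotheses of Proposition~\ref{prop:constant eval map} are met and the evaluation map $\ev_\calA\colon X_1(\bQ_p) \to \Br(\bQ_p)$ is constant. To pin down the constant I would evaluate $\calA$ at the explicit local point recorded in Table~\ref{ta:local points}, using whichever of the representatives~\eqref{eq:representatives} has both entries defined and nonzero there, and compute the resulting Hilbert symbol over $\bQ_p$ (a short Legendre-symbol computation, even for the large primes). In each case this invariant is $0$, so by constancy $\inv_p\big(\calA(P)\big) = 0$ for all $P \in X_1(\bQ_p)$.

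The main obstacle is concentrated entirely in the bad-reduction case, and within it the geometric input to Proposition~\ref{prop:constant eval map}: one must genuinely verify, for each odd prime of bad reduction, both that the chosen model is regular and that its singular fiber degenerates only to fewer than eight ordinary double points --- exactly the hypothesis ruling out the Nikulin-involution phenomenon flagged in the remark following Lemma~\ref{lem:no cyclic covers}. For the two enormous primes this verification, together with the final Hilbert-symbol evaluation, can only be carried out on a computer, and care is needed to ensure the reduction is genuinely nodal (rather than carrying worse singularities or being non-reduced) so that $r$ stays below $8$. Once all four classes of places are handled, assembling the values proves the stated formula; as an immediate consequence $\sum_{p \leq \infty} \inv_p\big(\calA(P)\big) = 1/2 \neq 0$ for every adelic point, which is the Brauer--Manin obstruction to the Hasse principle.
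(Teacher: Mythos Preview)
Your proposal is correct and follows essentially the same four-case strategy as the paper's own proof: odd primes of good reduction via Lemma~\ref{lem:most invariants}, odd primes of bad reduction via Proposition~\ref{prop:constant eval map} and Lemma~\ref{lem:no cyclic covers} followed by evaluation at the explicit points of Table~\ref{ta:local points}, and the real and $2$-adic places via Corollary~\ref{cor:real place} and Lemma~\ref{lem:2-adic invariants} respectively. The only minor difference is that the paper shortcuts the verification of $r<8$ ordinary double points at most bad primes by observing that the $p$-adic valuation of the discriminant computed in~\S\ref{ss: bad red} is one (forcing a single node), reserving direct computation for the remaining primes; otherwise your outline and the paper's argument coincide.
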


\begin{proof}
Whenever $p \neq 2$ is a finite prime of good reduction for $X_1$, we have $\inv_p\big(\calA(P)\big) = 0$ for all $P$, by Lemma~\ref{lem:most invariants}.  

At every odd prime of bad reduction of $X_1$, the singular locus consists of $r < 8$ ordinary double points: for most of these primes $p$ the claim follows because the valuation at $p$ of the discriminant of $X_1$ is one, by our work in~\S\ref{ss: bad red}, so the singular locus consists of a single ordinary double point. For the remaining primes, a straightforward computer calculation does the job.

Together with  Proposition~\ref{prop:constant eval map} and Lemma~\ref{lem:no cyclic covers}, this implies that $\inv_p\big(\calA(P)\big)$ is independent of $P$; it thus suffices to evaluate these invariants at a single point $P$. We use the local points listed in Table~\ref{ta:local points} to verify that all the local invariants vanish.  

Finally, the quadrics~\eqref{eq:quadrics} are readily seen to satisfy the hypotheses of Lemmas~\ref{lem:real place} and~\ref{lem:2-adic invariants}, which establishes the claim for real and $2$-adic points of $X_1$, using Corollary~\ref{cor:real place}.
\end{proof}

%****************************************************************************
\subsection{Proof of Theorem~\ref{thm: main}}

The first part of the Theorem is just Proposition~\ref{prop: brauer elt}. We specialize now to the case $k = \bQ$.

Let $A,\dots,F$ be as in~\eqref{eq:quadrics}, so that $X$ is the surface $X_1$ considered throughout this section. The cohomology group $H^1\big(\bQ,\Pic(\overline{X})\big)$ is trivial, because $\Pic(\overline{X}) \cong \bZ$, with trivial Galois action, by Proposition~\ref{prop: Picard Rank 1}. By~\eqref{eq: HS}, we have $\Br_1(X) = \Br_0(X)$. Hence, the class $\calA \in \Br(X)$ is transcendental, if it is not constant.

We established in \S\ref{ss: Local points} that $X(\bA) \neq \emptyset$. On the other hand, $X(\bA)^\calA = \emptyset$, by Proposition~\ref{prop: Invariants computed}. This shows that $\calA$ is nonconstant, and that $X(\bA)^{\Br} = \emptyset$.
\qed

%****************************************************************************
%\subsection{The isogenous K3 surface}
%Recall the construction of $X_1$ also gives rise to a second K3 surface $X_2$, with its own transcendental 
%
%\tony{this section needs to be expanded...I had forgotten about it.}

%****************************************************************************
\section{Computations}
\label{s: computations}

In the interest of transparency, we briefly outline the computations that led to the example witnessing the second part of Theorem~\ref{thm: main}. The basic idea is to construct ``random'' K3 surfaces of the form~\eqref{eq: explicit K3}, and perform a series of tests that guarantee the statement of Theorem~\ref{thm: main} holds.  Any surface left over after Step~7 below is a witness to this theorem.  \\ %The computations were performed in \tony{MAGMA, M2 and Sage} \\

\noindent {\bf Step 1: Seed polynomials.} Generate random homogenous quadratic polynomials 
\[
A, B, C, D, E, \textup{ and } F \in \bZ[x_0,x_1,x_2],
\]
with coefficients in a suitable range, subject to the constraints imposed by the hypotheses of Lemma~\ref{lem:2-adic invariants}. We also require that the signs of $x_0^2$, $x_1^2$ and $x_2^2$ are positive for $B$, $C$ and $E$, and negative for $A$, $D$ and $F$, to improve the chances that the hypotheses of Lemma~\ref{lem:real place} are satisfied. If these hypotheses are not satisfied, then start over.\\

\noindent {\bf Step 2: Smoothness.} Compute $f := -\frac{1}{2}\det(M)$, where $M$ is the matrix in~\eqref{eq: K3}. This is an equation for the curve $C_1$. Use the Jacobian criterion to check smoothness of $C_1$ over $\bQ$ and $\bF_3$ (the latter will be needed to certify that the K3 surface $X_1$ has Picard rank $1$). If either condition is not satisfied, then start over.\\

\noindent {\bf Step 3: Tritangent lines.} Here we have the hypotheses of Proposition~\ref{prop:check rho = 1} in mind. Over $\bF_3$, use \cite[Algorithm~8]{ElsenhansJahnelrankone} to test for the existence of a tritangent line to $C_1$. Let
\[
S := \{p : 5 \leq p \leq 100 \textup{ a prime of good reduction for $C_1$}\}.
\]
Find $p \in S$, such that $C_1$ over $\bF_p$ has no tritangent line. If either test fails, then start over. \\

\noindent {\bf Step 4: Local points.} For primes $p \leq 22$ and $p = \infty$, test for $\bQ_p$-points of $X_1/\bQ : {w^2 = f}$ by plugging in integers with small absolute value (typically $1$ or $2$) for $x_0$, $x_1$ and $x_2$, and determining whether $f$ is a $p$-adic square. If this test fails,  then it is plausible that $X_1$ has no local points (false negatives are certainly possible); start over. \\

\noindent {\bf Step 5: Point Counting.} Use \cite[Algorithm~15]{ElsenhansJahnelrankone} to determine $X_1(\bF_{3^n})$ for $n = 1,\dots,10$. This algorithm counts \emph{Galois orbits} of points, saving a factor of $n$ when counting $\bF_{3^n}$-points. Use \cite[Algorithms~21 and~23]{ElsenhansJahnelrankone} to determine an upper bound $\rho_{\textup{up}}$ for the geometric Picard number of the surface $X_1$ over $\bF_3$. If $\rho_{\textup{up}} > 2$, then start over. Otherwise, Proposition~\ref{prop:check rho = 1} guarantees that $X_1$ has geometric Picard number 1, by our work in Step 3.\\

\noindent {\bf Step 6: Primes of bad reduction.} The primes of bad reduction of $X_1$ and $C_1$ coincide. The latter divide the generator $m$ of the ideal obtained by saturating
\[
\left\langle f, \frac{\partial f}{\partial x_0}, \frac{\partial f}{\partial x_1}, \frac{\partial f}{\partial x_2} \right\rangle \subset \bZ[x_0,x_1,x_2]
\] 
by the irrelevant ideal and eliminating $x_0$, $x_1$ and $x_2$. Compute an equation for $C_2$, as well as the analogous integer $n$ giving its primes of bad reduction. Typically, $m$ and $n$ will be very large. Proceed as in \S\ref{ss: bad red} to factorize them. \\

\noindent {\bf Step 7: Computations at places of bad reduction.} At odd places of bad reduction, check for local points, as in Step~4. Determine the (geometric) singular locus. If at any prime in question the locus does not consist of $r < 8$ ordinary double points, then start over. Use to the local points found to compute the (constant) value the invariants of $\calA$ takes at these places.

%****************************************************************************

\bibliographystyle{alpha}
\bibliography{K3Hasse}
\nocite{*}

\end{document}